\providecommand{\tabularnewline}{\\}
\newcommand{\lyxdot}{.}
\numberwithin{equation}{section}
\numberwithin{figure}{section}
\theoremstyle{plain}
\newtheorem*{thm*}{\protect\theoremname}
\theoremstyle{plain}
\newtheorem{thm}{\protect\theoremname}
\theoremstyle{definition}
\newtheorem{defn}[thm]{\protect\definitionname}
\theoremstyle{definition}
\newtheorem{example}[thm]{\protect\examplename}
\theoremstyle{plain}
\newtheorem{prop}[thm]{\protect\propositionname}
\providecommand{\definitionname}{Definition}
\providecommand{\examplename}{Example}
\providecommand{\propositionname}{Proposition}
\providecommand{\theoremname}{Theorem}
\begin{document}
\global\long\def\Aacces#1#2#3{\prescript{}{#1}{\mathbf{\mathbb{A}}^{#3}}_{#2}^{\ }}%

\global\long\def\Aaccess#1#2#3{\prescript{}{#1}{\mathbf{\mathbb{A}}}_{#2}^{#3}}%

\global\long\def\Tun#1#2#3{\prescript{#3}{#2}{\Theta}_{#1}^{01}}%

\global\long\def\Teu#1#2#3#4{\prescript{#4}{#3}{\Theta}_{#1,#2}^{02}}%

\global\long\def\Tnu#1#2#3#4{\prescript{#4}{#3}{\Theta}_{#1,#2}^{11}}%

\global\long\def\parent#1{\mathfrak{p}_{#1}}%

\global\long\def\voisin#1{\mathfrak{v}_{#1}}%

\global\long\def\AA{\mathbb{A}}%

\global\long\def\pair#1#2#3{\underset{#3}{\left\lfloor \begin{array}{c}
#1\\
#2
\end{array}\right.}}%

\title{The Saito vector field of a germ of complex plane curve.}
\author{Yohann Genzmer}
\begin{abstract}
In this article, we prove that an algorithm introduced by the author
in a previous work and giving the generic dimension of the moduli
space of a germ of curve in the complex plane that is the union of
smooth curves, can be used identically to find this dimension for
any kind of germ of plane curve.
\end{abstract}

\maketitle

\section{Introduction.}

\subsection*{Saito vector field.}

Let $C$ be a germ of curve in the complex plane $\left(\mathbb{C}^{2},0\right).$
The curve is given by the $0-$level of an holomorphic reduced function
$f:\left(\mathbb{C}^{2},0\right)\to\left(\mathbb{C},0\right)$. The
Saito module is by definition, the module $\textup{Der}\left(\log C\right)$
of germ of vector fields $X$ tangent to $C,$i.e, 
\[
X\cdot f\in\left(f\right),
\]
where $X\cdot f$ is the result of the application of $X$ to $f$
seen as the derivation. Naturally, this module contains a lot of informations
on the curve. Of particular interest are the valuations of its elements
and the so called Saito number of $C$, that is, 
\[
\mathfrak{s}_{C}=\min_{X\in\textup{Der}\left(\log C\right)}\nu\left(X\right).
\]

This number is clearly an analytical invariant of $C$. Nevertheless,
it is not a topological invariant of the curve : for instance, if
$C$ is the union of $5$ straigth lines then the radial vector field
is tangent and thus, $\mathfrak{s}_{C}=1.$ But for a generic perturbation
of $C$ that leaves invariant its topological type, it can be seen
that $\mathfrak{s}_{C}=2.$ In \cite{genzmer2020saito}, we proved
the following upper bound that holds along the topological class $\textup{Top\ensuremath{\left(C\right)} }$of
$C$
\begin{equation}
\max_{c\in\textup{Top}\left(C\right)}\mathfrak{s}_{C}=\left\lfloor \frac{\nu\left(C\right)}{2}\right\rfloor \textup{ or }\frac{\nu\left(C\right)}{2}-1\label{eq:uppe}
\end{equation}
depending on $C$ being \emph{radial} or not. A vector field $X$
tangent to $C$ whose valuation reaches the Saito number is said \emph{optimal.
}In the present article, we are interesting in the topology of an
optimal vector field for a curve $c\in\textup{Top}\left(C\right)$
which realizes the upper bound (\ref{eq:uppe}). Such a generic vector
field will be said \emph{Saito} for $C.$ As it is highlighted in
\cite{moduligenz}, some invariants associated to the topology of
the Saito vector field of $C$ allows us to produce an algorithm for
the computation of the generic dimension of the moduli space of $C.$ 

\subsection*{Generic dimension of the moduli space of $C.$}

The problem of the determination of the number of moduli of a germ
of complex plane curve was addressed by Oscar Zariski in his famous
notes \cite{zariski}, where he focused on the case of a curve with
only one irreducible component. The\emph{ number of moduli} refers
to the number of analytical invariants that remain once the topological
class of $S$ is given. The topological classification of an irreducible
curve $S$ is well known and relies on a semi-group of integers extensively
studied by Zariski himself in the 70s. However, at this time, even
in the case of an irreducible curve, the analytical classification
was a widely opened question. Since then, a lot of progress has been
made, and, up to our knowledge, the initial problem has been considerably
investigated in the combination of the works of A. Hefez, M. Hernandes
and M. E. Hernandes \cite{MR2509045,MR2781209,MR2996882,hernandes2023analytic}. 

In \cite{moduligenz}, the author constructed an algorithm to compute
the generic dimension of the moduli space of a germ of complex curve
$C$ and proved that this algorithm yields the desired dimension under
the assumption that $C$ is an union of smooth curves. In \cite{YoyoBMS},
it was established that the same algorithm still works if $C$ is
irreducible. The goal of this article is to prove that this algorithm
provides the expected dimension in any case. 

If $C$ is a germ of curve in the complex plane, we consider $\textup{Mod}\left(C\right)$
the quotient of the set of curves topologically equivalent to $C$
by the action of the local conjugacies $\textup{Diff}\left(\mathbb{C}^{2},0\right)$.
In \cite{genzmer2020saito} following ideas of Ebey \cite{MR0176983},
we identify $\textup{Mod}\left(C\right)$ to the quotient of a constructible
subset of some finite dimension complex vector space by the action
of an algebraic connected group of finite dimension. This identification
allows us to consider the generic dimension of this quotient. The
mention algorithm intends to compute this dimension from primitive
topological invariants of $C.$ The approach is as follows :
\begin{enumerate}
\item for $c\in\textup{Mod}\left(C\right)$, $c$ being generic, the dimension
of the cohomological space $\textup{H}^{1}\left(D,\Theta\right)$
where $D$ is the exceptional divisor of the desingularization process
of $c$ and $\Theta$ the sheaf over $D$ of vector field tangent
to $D$ and the strict transform of $c,$ is equal to the generic
dimension of $\textup{Mod}\left(C\right).$
\item for any curve $c,$ the module $\textup{Der}\left(\log c\right)$
known since K. Saito \cite{MR586450} to be freely generated by two
vector fields $X_{1}$ and $X_{2}$. One of them reaches the Saito
number of $c.$ 
\item when $c$ is generic, the topology of a generic vector field reaching
the Saito number of $c$ can be described and is somehow unique. 
\item from this description, one can compute the dimension of $\textup{H}^{1}\left(D_{0},\Theta\right)$
where $D_{0}$ is the exceptional divisor of the first blowing-up
appearing in the process of desingularization of $c,$ initiating
a computation of the dimension of the whole $\textup{H}^{1}\left(D,\Theta\right)$
inductively along the process of blowing-ups. 
\end{enumerate}
The main result of this article focuses on the claim (3) among these
above and can be expressed as follows :
\begin{thm*}
Let $C$ be a germ of curve in $\left(\mathbb{C}^{2},0\right)$ generic
in its moduli space. Let $X$ be Saito for $C$. Let $\AA$ be the
dual tree of the desingularization process $E$ of $C.$ We number
a vertex $s$ of $\AA$ by the number of singular points of the strict
transform $X^{E}$ along $s.$ We color a vertex in white if $s$
is invariant by $X^{E}$, otherwise we color it in black. Then, the
colored numbered tree $\AA$ depends only on the topological class
of $C.$ 
\end{thm*}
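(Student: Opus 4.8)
The plan is to argue by induction along the tower of blowing-ups that constitutes the desingularization $E$, reducing the statement to a purely local analysis of how the foliation $\mathcal{F}$ attached to $X$ transforms under a single blowing-up. First I would recall that the bare tree $\AA$, together with all the numerical data of the resolution (the valuations of $C$ at each infinitely near point, the self-intersections $D_{s}\cdot D_{s}$ of the components, and the multiplicities of the total transform of $C$), is a classical topological invariant; hence the only real content is that the \emph{coloring} and the \emph{numbering} are topological as well. The coloring of a vertex $s$ records whether the component $D_{s}$ is invariant by the transformed field $X^{E}$, and the numbering counts the singular points of $X^{E}$ lying on $D_{s}$, among which the corner points $D_{s}\cap D_{s'}$ and the points $D_{s}\cap C^{E}$ are always present.

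The local model is as follows. If at an infinitely near point $p$ the field $X^{E}$ has valuation $\nu_{p}$ and homogeneous leading part $X_{\nu_{p}}=a\,\partial_{x}+b\,\partial_{y}$, then after blowing up $p$ the new component $D$ is dicritical, hence colored black, exactly when $x\,b-y\,a\equiv 0$, i.e. when $X_{\nu_{p}}$ is a multiple of the radial field; otherwise $D$ is invariant, hence white, and the singularities of $X^{E}$ on $D\cong\mathbb{P}^{1}$ sit at the projective zeros of $x\,b-y\,a$. The whole inductive scheme therefore rests on two points: that the valuation $\nu_{p}$ of $X^{E}$ at every infinitely near point is prescribed by the topology of $C$, and that the leading part $X_{\nu_{p}}$ is \emph{generic} among the fields of that valuation compatible with tangency to $C^{E}$ and to the already created divisor. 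The first point I would derive from the value of the Saito number along the topological class, equation~(\ref{eq:uppe}), together with the way the valuation of a tangent field drops under blowing-up; the maximality built into the definition of a Saito field is precisely what pins $\nu_{p}$ down at every stage.

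Granting genericity of the leading parts, the numbering is then computed by index theory on each $\mathbb{P}^{1}$. On an invariant component the Camacho--Sad relation forces $\sum_{q\in D_{s}}\mathrm{CS}(X^{E},D_{s},q)=D_{s}\cdot D_{s}$, and genericity makes all singularities reduced, so their number is determined by the self-intersection together with the topologically fixed contributions of the corners and of $D_{s}\cap C^{E}$; on a dicritical component the number of singular points is read off from the tangency formula between $X^{E}$ and $D_{s}$, again a topological quantity once simplicity is known. Combining the two local outcomes along the tree, both the color and the number at each vertex become explicit functions of the resolution data of $C$, which proves the invariance. This bookkeeping also recovers, as its two extreme cases, the results already established for unions of smooth branches in \cite{moduligenz} and for irreducible germs in \cite{YoyoBMS}.

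The main obstacle, and the place where the genericity of $C$ in its moduli space must genuinely be used, is to show that the expected generic behavior of the leading parts propagates to \emph{every} infinitely near point of the resolution, and not merely at the first blowing-up. Concretely, one must prove that the loci in the relevant parameter spaces on which some deeper leading jet degenerates --- producing a non-reduced singularity, an extra tangency, or an unexpected invariant or dicritical component --- are proper closed subsets, so that a generic $c$ in the topological class avoids all of them simultaneously. A second, more structural difficulty, genuinely new with respect to the two known cases, is to treat uniformly the \emph{mixed} vertices, where $D_{s}$ meets several distinct branches of the strict transform at once: there the local contributions coming from the different branches and from the corners must be reconciled inside a single index computation, and it is exactly this reconciliation that merges the argument for smooth unions and the argument for irreducible germs into one proof valid for an arbitrary germ $C$.
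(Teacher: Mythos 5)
Your plan defers rather than resolves the two points on which the whole theorem rests, and the mechanism you propose in their place does not suffice. The first gap is the determination of the coloring. You assert that a vertex is black exactly when the leading part of $X^{E}$ is a multiple of the radial field, and that genericity of the leading jets (subject to tangency) then decides the color; but you give no way to decide, vertex by vertex, whether that constrained-generic leading part is radial or not. This is not a local question: already for a single blow-up, if the number $n_{r}$ of branches attached to the first component is at least $3$, tangency forces every optimal field to have radial leading part (the vertex is black), while for $n_{r}\leq2$ it is white. Which colors occur is governed by a global constraint-satisfaction problem over the whole tree, in which the configuration value at a vertex is coupled to the colors of its neighbours, through the conditions $\epsilon_{c}\geq2-\sum_{c^{\prime}\in\voisin c}\Delta_{c^{\prime}}$ when $\Delta_{c}=0$ and $\epsilon_{c}\geq n_{c}$ when $\Delta_{c}=1$. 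The paper's entire Section 2 (Theorem \ref{Theoreme.Un}) proves that this system has exactly one solution $\Delta^{n}$ for each numbered tree; that existence-and-uniqueness statement is what makes the colored numbered tree topological, and it also supplies the numbering (the count of singular points is $\epsilon_{s}^{n}$ plus neighbour contributions --- the Camacho--Sad relation alone constrains sums of indices, not numbers of points). Nothing in your sketch plays this role; your closing paragraph, which flags the mixed vertices and the propagation of genericity as ``obstacles,'' names precisely the content of the proof rather than supplying it.

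The second gap is circularity in your claim that the valuation $\nu_{p}$ at every infinitely near point is prescribed by the topology via (\ref{eq:uppe}). The valuation of an optimal field along the resolution is
\[
\nu\left(X_{s}^{E}\right)=\frac{\nu\left(C_{s}^{E}\right)+\delta_{s}^{X}}{2}-\pair{1-\Delta_{s}^{X}}{\frac{1}{2}}{\nu\left(C_{s}^{E}\right)+\delta_{s}^{X}},
\]
which involves $\Delta_{s}^{X}$ and $\delta_{s}^{X}$, i.e.\ the unknown coloring itself; valuations and colors must be pinned down simultaneously. Moreover (\ref{eq:uppe}) controls only the Saito number at the origin, over the topological class, and genericity of $C$ in its moduli space does not transfer for free to the germs of $D\cup C^{E}$ at infinitely near points, since those germs are not independent of one another. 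The paper circumvents both difficulties in a way your argument never touches: it first \emph{constructs}, by gluing the semi-local models $\mathcal{R}$ and $\mathcal{G}$ along the tree according to the combinatorial data $\left(\Delta^{n},\epsilon^{n}\right)$, a special curve in the topological class carrying a tangent field with exactly the prescribed valuations and dicriticities, and then uses a cohomological deformation argument (the Gomez-Mont exact sequence together with the vanishing of $\textup{H}^{1}\left(D,\Omega^{2}\left(D_{\mathfrak{X}}\right)\right)$) to transport this field along a deformation reaching the generic curve, where the lower bound of \cite[Theorem 4]{genzmer2020saito} certifies optimality at every stage, hence the Saito property. Without an existence argument of this kind, and without the combinatorial uniqueness, your induction has no base and no inductive step.
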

The article is divided in two sections : the first can be red independently.
It focuses on a combinatorial result on trees. However, this result
will be a key element to describe the topology of the generic vector
field tangent to a generic curve $C,$ that is the main goal of the
second section. 

\section{Saito dicriticity of an ordered numbered tree.}

Let $\AA$ be a tree. We can endow $\AA$ with a partial order $\leq$
defined following an inductive description of $\AA$ : starting from
a single vertex $r$, if $\left(\AA,\leq\right)$ is defined, one
can add a vertex to $\AA$ following one of the two next rules
\begin{enumerate}
\item a vertex $s$ and an edge from $s$ to a single vertex $c$ are added
to $\AA$. The order $\leq$ is extended to $\AA\cup\left\{ s\right\} $
setting $s\geq c.$
\item a vertex $s$ is added to $\AA$ deleting an egde from $c$ to $c^{\prime}$
and adding two edges from $s$ to $c$ and from s to $c^{\prime}$.
The order is extended setting $s\geq c$ and $s\geq c^{\prime}.$ 
\end{enumerate}
\begin{figure}
\begin{centering}
\includegraphics[width=0.7\textwidth]{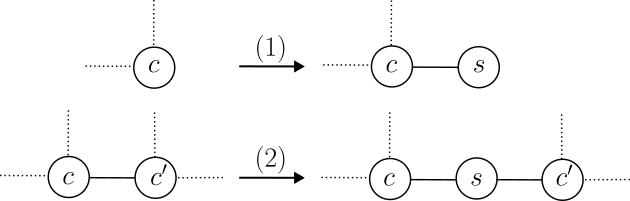}
\par\end{centering}
\caption{Inductive construction of the partial order $\protect\leq$ on $\protect\AA$.}

\end{figure}

The vertex $r$ is the minimal element of $\left(\AA,\leq\right)$
and is called the \emph{root} of $\AA.$ In the sequel, in general,
we will make no distinction between $\mathbb{A}$ and the set of vertices
of $\AA.$ 

We will denote by $n=\left(n_{c}\right)_{c\in\mathbb{A}}$ a numbering
of the vertices of $\mathbb{A}$ by non negative integers. We will
also consider an element $\Delta=\left(\Delta_{c}\right)_{c\in\mathcal{\mathbb{A}}}$
in $\left\{ 0,1\right\} ^{\mathbb{A}}.$ The latter is called \emph{a
dicriticity for $\mathbb{A}.$} It induces a coloring of the tree
$\AA$: if $\Delta_{c}=1$ the vertex $c$ is colored in white, if
not, it is colored in black. 

Finally, the notation $\pair abc$ stands for the the following :
\[
\pair abc=\left\{ \begin{array}{cl}
a & \textup{if }c\textup{ is even}\\
b & \textup{else}
\end{array}\right.
\]

\begin{defn}
\label{def1}In what follows, $c$ stands for a vertex of $\AA$. 
\begin{enumerate}
\item We denote by $\parent c$ \emph{the set of parents of $c$} that is
the set of predecessors of $c$ for the partial order $\leq$. Notice
that $\parent r=\emptyset$ ; in any other cases, $\parent c$ contains
one or two elements. 
\end{enumerate}
\end{defn}

\begin{enumerate}
\item Following \cite{MR2107253}, fixing a numbering $\left\{ 1,\ldots,N\right\} $
of the vertices such that $i\in\parent j\Longrightarrow i\leq j$,
we consider the proximity matrix $\mathbb{P}$ of $\left(\AA,\leq\right)$
defined by 
\[
\begin{array}{cl}
\mathbb{P}_{i,i} & =1\\
\mathbb{P}_{i,j} & =-1\textup{ if }i\in\parent j\\
\mathbb{P}_{i,j} & =0\textup{ else.}
\end{array}.
\]
It is an upper triangular invertible matrix.
\item If $n$ is numbering of $\mathbb{A}$, then $c\cdot n$ is the numbering
defined by 
\[
\left(c\cdot n\right)_{c}=n_{c}+1
\]
 and $\left(c\cdot n\right)_{c^{\prime}}=n_{c^{\prime}}$ if $c^{\prime}\neq c.$
\item In what follows, $\mathfrak{v_{c}}$ denotes \emph{the set of neighbours
of $c$} in $\AA$, that is the set of vertices of $\AA$ connected
to $c$.
\item We called \emph{the multiplicity of $c$} in $\AA$ the positive integer,
denoted by $\rho_{c}$ obtained inductively as follows : $\rho_{r}=1,$
and if $c\neq r$ then 
\[
\rho_{c}=\sum_{c^{\prime}\in\parent c}\rho_{c^{\prime}}.
\]
\item We denote by $\delta_{c}$ the number of parents $c^{\prime}$ of
$c$ such that $\Delta_{c^{\prime}}=1.$ In particular, this number
depends not only on $\AA$ but also on a dicriticity $\Delta$. 
\item We called\emph{ the valuation of $c$} the number denoted by $\nu_{c}^{n}$
and defined by the matrix relation 
\[
\left(\nu_{c}^{n}\right)_{c\in\mathbb{A}}=\mathbb{P}^{-1}\left(n_{c}\right)_{c\in\mathbb{A}}.
\]
In particular, from \cite{MR2107253}, it follows that 
\[
\nu_{r}^{n}=\sum_{c\in\AA}\rho_{c}n_{c}
\]
\item The \emph{square index} of $c$ is defined by 
\[
\square_{c}=\frac{\delta_{c}}{2}-\pair{\Delta_{c}}{\frac{1}{2}}{\nu_{c}^{n}-\delta_{c}}.
\]
\item If $c$ and $c^{\prime}$ belong to $\AA,$ we defined\emph{ the acces
tree} from $c$ to $c^{\prime}$ the minimal subgraph of $\AA$ that
from $c$, leads to $c^{\prime}$ respecting the order $\leq.$ It
is denoted by 
\[
\Aacces c{c^{\prime}}{}.
\]
If $c=r$ is the root of $\AA$, then it is simply denoted by 
\[
\Aacces{}{c^{\prime}}{}.
\]
If $\rho_{c^{\prime}}=1$ then the access tree $\Aacces{}{c^{\prime}}{}$
is a totally ordered linear chain of vertices whose multiplicities
are equal to $1$ as in Figure \ref{fig:Acces-tree-from}. The proximity
matrix of this sub-graph is written
\[
\left(\begin{array}{cccc}
1 & -1\\
 & 1 & -1\\
 &  & \ddots & -1\\
 &  &  & 1
\end{array}\right)
\]
\begin{figure}[H]
\begin{centering}
\includegraphics[scale=0.3]{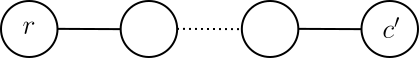}
\par\end{centering}
\caption{\label{fig:Acces-tree-from}Acces tree from $r$ to $c^{\prime}$
with $\rho_{c^{'}}=1.$}
\end{figure}
\item \label{configuration}We denote by $\epsilon=\left(\epsilon_{c}\right)_{c\in\AA}$
the family of integers defined by the following matrix relation 
\[
\left(\epsilon_{c}\right)_{c\in\AA}=\mathbb{P}\left(\frac{1}{2}\left(\nu_{c}^{n}\right)_{c\in\AA}-\left(\square_{c}\right)_{c\in\mathbb{A}}\right)=\left(\frac{n_{c}}{2}\right)_{c\in\AA}-\mathbb{P}\left(\left(\square_{c}\right)_{c\in\mathbb{A}}\right).
\]
 This uple of integers is called \emph{the configuration associated
to the dicriticity $\Delta.$}
\end{enumerate}
\begin{example}
Let us consider the tree represented in Figure (\ref{fig:Un.exemple}).

\begin{figure}
\begin{centering}
\includegraphics[width=0.7\linewidth]{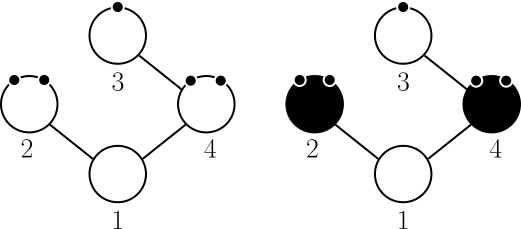}
\par\end{centering}
\caption{\label{fig:Un.exemple}An ordered tree, its numbering and its dicriticity.}

\end{figure}

In this example, the proximity matrix is written 
\[
\mathbb{P}=\left(\begin{array}{cccc}
1 & -1 & -1 & -1\\
0 & 1 & 0 & 0\\
0 & 0 & 1 & -1\\
0 & 0 & 0 & 1
\end{array}\right).
\]
The numbering is $n=\left(0,2,1,2\right)$ and is represented in Figure
(\ref{fig:Un.exemple}) by dots attached to the vertices. We have
\[
\begin{array}{l}
\parent 1=\emptyset,\ \parent 2=\left\{ 1\right\} ,\ \parent 3=\left\{ 1\right\} ,\ \parent 4=\left\{ 1,3\right\} \\
\voisin 1=\left\{ 2,4\right\} ,\ \voisin 2=\left\{ 1\right\} ,\ \voisin 4=\left\{ 1,3\right\} ,\ \voisin 3=\left\{ 4\right\} .
\end{array}
\]
The partial order induced on $\AA$ by the rules of construction is
 
\[
1\leq2,\ 1\leq3\leq4.
\]

The multiplicities are 
\[
\rho_{1}=1,\ \rho_{2}=1,\ \rho_{3}=1,\ \rho_{4}=2.
\]

Given the numbering $n$, the valuations are 
\[
\left(\begin{array}{c}
\nu_{1}\\
\nu_{2}\\
\nu_{3}\\
\nu_{4}
\end{array}\right)=\mathbb{P}^{-1}\left(\begin{array}{c}
0\\
2\\
1\\
2
\end{array}\right)=\left(\begin{array}{cccc}
1 & 1 & 1 & 2\\
0 & 1 & 0 & 0\\
0 & 0 & 1 & 1\\
0 & 0 & 0 & 1
\end{array}\right)\left(\begin{array}{c}
0\\
2\\
1\\
2
\end{array}\right)=\left(\begin{array}{c}
7\\
2\\
3\\
2
\end{array}\right).
\]

Now, assuming the dicriticity is $\Delta=\left(1,0,1,0\right)$ as
in the figure, we obtain 
\[
\delta_{1}=0,\ \delta_{2}=1,\ \delta_{3}=1,\ \delta_{4}=2
\]
and 
\[
\begin{array}{c}
\square_{1}=\frac{0}{2}-\pair 1{\frac{1}{2}}{7-0}=-\frac{1}{2},\ \square_{2}=\frac{1}{2}-\pair 0{\frac{1}{2}}{2-1}=0\\
\square_{3}=\frac{1}{2}-\pair 1{\frac{1}{2}}{3-1}=-\frac{1}{2},\ \square_{4}=\frac{2}{2}-\pair 0{\frac{1}{2}}{2-2}=1.
\end{array}
\]
Finally, the configuration $\epsilon$ is computed as follows 
\[
\left(\begin{array}{c}
\epsilon_{1}\\
\epsilon_{2}\\
\epsilon_{3}\\
\epsilon_{4}
\end{array}\right)=\frac{1}{2}\left(\begin{array}{c}
0\\
2\\
1\\
2
\end{array}\right)-\left(\begin{array}{cccc}
1 & -1 & -1 & -1\\
0 & 1 & 0 & 0\\
0 & 0 & 1 & -1\\
0 & 0 & 0 & 1
\end{array}\right)\left(\begin{array}{c}
-\frac{1}{2}\\
0\\
-\frac{1}{2}\\
1
\end{array}\right)=\left(\begin{array}{c}
1\\
1\\
2\\
0
\end{array}\right).
\]
\end{example}

As it can be seen in the previous example, from any dicriticity $\Delta$,
one can compute a configuration $\epsilon$ following Definition \ref{def1}.\ref{configuration}.
However, adding some constraints yields a unicity type result. 
\begin{thm}
\label{Theoreme.Un}Consider a numbering $n$ of $\AA$. 
\begin{enumerate}
\item \label{Existence.Unicite.Saito}There exists a unique dicriticity,
denoted by $\Delta^{n}=\left(\Delta_{c}^{n}\right)_{c\in\AA}$ such
that the associated configuration $\epsilon^{n}$ satisfies the following
relations
\begin{enumerate}
\item if $\Delta_{c}^{n}=0,$ then $\epsilon_{c}^{n}\geq2-\sum_{c^{\prime}\in\voisin c}\Delta_{c^{\prime}}^{n}$
\item if $\Delta_{c}^{n}=1$, then $\epsilon_{c}^{n}\geq n_{c}.$
\end{enumerate}
Such a dicriticity is said \emph{admissible} and is called \emph{the
Saito dicriticity of $\AA$ numbered by $n.$} The exponent $n$ appearing
on any data in the sequel will mean that these datas are associated
to the Saito dicriticity for a given numbering $n$.
\item \label{propriete.theta1}We define by $\Tun c{\mathbb{A}}n$ the following
invariant 
\[
\Tun c{\mathbb{A}}n=\sum_{s\in\Aacces{}c{}}\square_{s}^{n}+\square_{s}^{c\cdot n}.
\]
If $\rho_{c}=1$ then we obtain
\begin{equation}
\Tun c{\mathbb{A}}n=-\Delta_{c_{1}}^{n}-\frac{\left|\Aacces{}{c_{1}}{}\right|}{2}\label{Valeur de Theta1}
\end{equation}
where $\left|\star\right|$ denotes the number of vertices in the
subtree $\star.$
\item \label{propriete.theta2}Let $c_{0},c_{1}\in\mathbb{A}$. We define
by $\Teu{c_{0}}{c_{1}}{\mathbb{A}}n$ and $\Tnu{c_{0}}{c_{1}}{\mathbb{A}}n$
the following invariants 
\begin{align*}
\Teu{c_{0}}{c_{1}}{\mathbb{A}}n & =\sum_{s\in\Aacces{}{c_{1}}{}}\square_{s}^{n}+\square_{s}^{c_{1}\cdot c_{0}\cdot n}\\
\Tnu{c_{0}}{c_{1}}{\mathbb{A}}n & =\sum_{s\in\Aacces{}{c_{1}}{}}\square_{s}^{c_{0}\cdot n}+\square_{s}^{c_{1}\cdot n}.
\end{align*}
If $c_{0}$ and $c_{1}$ satisfy both $\rho_{c_{0}}=\rho_{c_{1}}=1$
then we have
\begin{align}
\Teu{c_{0}}{c_{1}}{\mathbb{A}}n & =\pm\frac{1}{2}-\Delta_{c_{1}}^{n}-\frac{\left|\Aacces{}{c_{1}}{}\right|}{2}\label{Valeur de Theta2}\\
\Tnu{c_{0}}{c_{1}}{\mathbb{A}}n & =\pm\frac{1}{2}-\Delta_{c_{1}}^{n}-\frac{\left|\Aacces{}{c_{1}}{}\right|}{2}\label{Valeur de Theta11}
\end{align}
\item \label{Mixed.branch}Let $c$ be a vertex of $\AA$ such that $\rho_{c}=1.$
We say that\emph{ }the access tree $\AA_{c}$ \emph{starts with a
mixed branch} if there exists a vertex $m_{c}\in\mathbb{A}_{c}$ maximal
for this property such that for any $s\in\mathbb{A}_{m_{c}}$, one
has 
\[
\Delta_{s}^{n}+\Delta_{s}^{c\cdot n}=1.
\]
It may happen that $m_{c}=c.$ If not, let us denote by $m_{c}^{+}$
the vertex of $\AA$ which succeeds $m_{c}$ in $\AA_{c}$. Depending
on the type the mixed branch, Table (\ref{Parit=0000E9 de nu}) presents
some properties of the valuations $\nu_{r}$ and $\nu_{m_{c}^{+}}$.
In this table, a picture such as \includegraphics[viewport=0bp 26.1811bp 299bp 95bp,width=0.07\paperwidth]{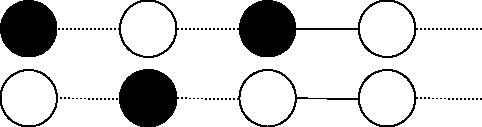}
represents the two Saito dicriticities along $\AA_{c}$ respectively,
above for the numbering $c$ and below for the numbering $c\cdot n$.
\\
Besides, if $m_{c}=c$, Table (\ref{pure.branch}) presents properties
on the valuations $\nu_{r}$ depending also on the type of what is
called in that case a \emph{pure} mixed branch.
\begin{center}
\begin{table}
\begin{tabular}{ccc}
 & $\nu_{r}$ & $\nu_{m_{c}^{+}}$ \tabularnewline[\doublerulesep]
\hline 
\noalign{\vskip\doublerulesep}
\includegraphics[width=0.1\paperwidth]{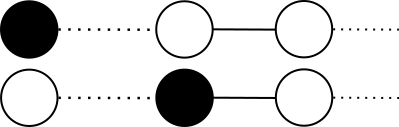} & $\textup{odd}$ & $\textup{even}$\tabularnewline[\doublerulesep]
\hline 
\noalign{\vskip\doublerulesep}
\includegraphics[width=0.1\paperwidth]{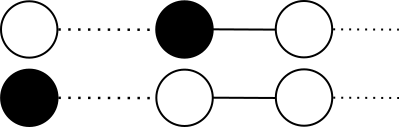} & $\textup{even}$ & $\textup{odd}$\tabularnewline[\doublerulesep]
\hline 
\noalign{\vskip\doublerulesep}
\includegraphics[width=0.1\paperwidth]{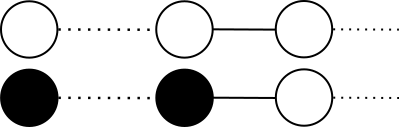} & $\textup{even}$ & $\textup{even}$\tabularnewline[\doublerulesep]
\hline 
\noalign{\vskip\doublerulesep}
\includegraphics[width=0.1\paperwidth]{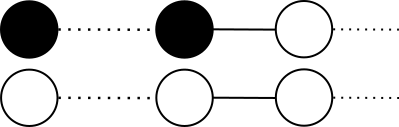} & $\textup{odd}$ & $\textup{odd}$\tabularnewline[\doublerulesep]
\end{tabular}$\qquad$%
\begin{tabular}{ccc}
 & $\nu_{r}$ & $\nu_{m_{c}^{+}}$ \tabularnewline[\doublerulesep]
\hline 
\noalign{\vskip\doublerulesep}
\includegraphics[width=0.1\paperwidth]{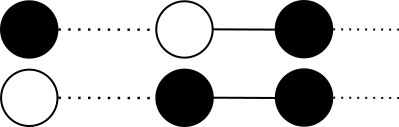} & $\textup{odd}$ & $\textup{odd}$\tabularnewline[\doublerulesep]
\hline 
\noalign{\vskip\doublerulesep}
\includegraphics[width=0.1\paperwidth]{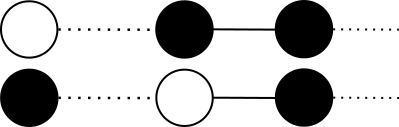} & $\textup{even}$ & $\textup{even}$\tabularnewline[\doublerulesep]
\hline 
\noalign{\vskip\doublerulesep}
\includegraphics[width=0.1\paperwidth]{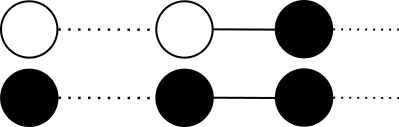} & $\textup{even}$ & $\textup{odd}$\tabularnewline[\doublerulesep]
\hline 
\noalign{\vskip\doublerulesep}
\includegraphics[width=0.1\paperwidth]{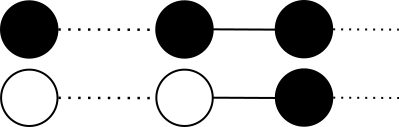} & $\textup{odd}$ & $\textup{even}$\tabularnewline[\doublerulesep]
\end{tabular}

\caption{\label{Parit=0000E9 de nu}The valuations $\nu_{r}$ and $\nu_{m_{c}^{+}}$
along a mixed branch.}
\end{table}
\par\end{center}

\begin{center}
\begin{table}
\begin{centering}
\begin{tabular}{cc}
$\left|\AA_{c}\right|>1$ & $\nu_{r}$\tabularnewline[\doublerulesep]
\hline 
\noalign{\vskip\doublerulesep}
\includegraphics[width=0.05\paperwidth]{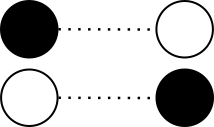} & $\textup{odd}$\tabularnewline[\doublerulesep]
\hline 
\noalign{\vskip\doublerulesep}
\includegraphics[width=0.05\paperwidth]{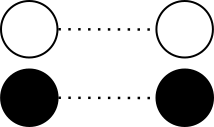} & $\textup{even}$\tabularnewline[\doublerulesep]
\hline 
\noalign{\vskip\doublerulesep}
\includegraphics[width=0.05\paperwidth]{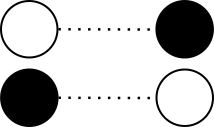} & $\textup{impossible}$\tabularnewline[\doublerulesep]
\hline 
\noalign{\vskip\doublerulesep}
\includegraphics[width=0.05\paperwidth]{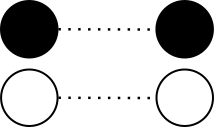} & $\textup{impossible}$\tabularnewline[\doublerulesep]
\end{tabular}$\qquad$%
\begin{tabular}{cc}
$\left|\AA_{c}\right|=1$ & $\nu_{r}$\tabularnewline[\doublerulesep]
\hline 
\noalign{\vskip\doublerulesep}
\includegraphics[width=0.0145\paperwidth]{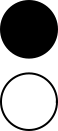} & $\textup{odd}$\tabularnewline[\doublerulesep]
\hline 
\noalign{\vskip\doublerulesep}
\includegraphics[width=0.0145\paperwidth]{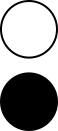} & $\textup{even}$\tabularnewline[\doublerulesep]
\end{tabular}
\par\end{centering}
\caption{\label{pure.branch}The valuation $\nu_{r}$ along a \emph{pure} mixed
branch.}
\end{table}
\par\end{center}
\item \label{component.dicritique}Finally, for each connected component
$\mathbb{K}$ of the sub-graph $\mathbb{A}\setminus\left\{ \left.c\in\AA\right|\Delta_{c}^{n}=0\right\} ,$
there exists $c\in\mathbb{K}$ with $\epsilon_{c}^{n}>0.$
\end{enumerate}
\end{thm}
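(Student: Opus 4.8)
The plan is to build everything on the existence and uniqueness statement (\ref{Existence.Unicite.Saito}), from which the computational identities (\ref{Valeur de Theta1})--(\ref{Valeur de Theta11}) and the parity tables follow, and then to deduce the positivity (\ref{component.dicritique}) from the uniqueness itself.

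First I would rewrite the configuration in the local form
\[
\epsilon_c = \frac{n_c}{2} - \square_c + \sum_{c' \text{ child of } c} \square_{c'},
\]
which follows from $(\nu_c^n)_c = \mathbb{P}^{-1}(n_c)_c$ and the explicit entries of $\mathbb{P}$, together with $\square_c = \frac{\delta_c}{2} - p_c$ where $p_c = \Delta_c$ when $\nu_c^n - \delta_c$ is even and $p_c = \frac12$ when it is odd. The crucial point is that $\Delta_c$ influences the admissibility constraints only through $\square_c$ and through the $\delta$-values it imposes on the children of $c$: when $\nu_c^n-\delta_c$ is even, flipping $\Delta_c$ changes $\square_c$ by a unit, while when it is odd $\square_c$ is independent of $\Delta_c$. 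I would then produce $\Delta^n$ by induction along a linear extension of $\leq$, checking at each vertex that conditions (a)--(b) admit exactly one value of $\Delta_c$ compatible with the colors already fixed, and conclude uniqueness by the same local dichotomy. I expect the parity bookkeeping --- the fact that recoloring a parent flips the parity of $\nu_c^n-\delta_c$ and thereby changes $\square_c$ --- to be the principal technical difficulty of the whole theorem.

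For parts (\ref{propriete.theta1})--(\ref{Mixed.branch}) I would exploit that, when $\rho_c=1$, the access tree $\Aacces{}{c}{}$ is a linear chain of multiplicity-one vertices. The invariants $\Tun c{\mathbb{A}}n$, $\Teu{c_0}{c_1}{\mathbb{A}}n$ and $\Tnu{c_0}{c_1}{\mathbb{A}}n$ are then sums of $\square$-contributions along this chain, and incrementing the numbering at $c$ (resp. at $c_0,c_1$) raises the valuations $\nu_s$ by a controlled amount, flipping only finitely many parities along the access tree. Telescoping these contributions and matching the parity flips against the $\frac12$ in $p_s$ yields the closed forms (\ref{Valeur de Theta1})--(\ref{Valeur de Theta11}), the sign $\pm\frac12$ recording precisely which parity of $\nu$ occurs. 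Part (\ref{Mixed.branch}) is then a finite case analysis: along a mixed branch one has $\Delta_s^n+\Delta_s^{c\cdot n}=1$, so the two Saito dicriticities are complementary there, and propagating the resulting parities of $\nu_r$ and $\nu_{m_c^+}$ through the chain produces the two tables.

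For the positivity (\ref{component.dicritique}) I would argue by contradiction against uniqueness. Suppose a connected component $\mathbb{K}$ of the white sub-graph satisfied $\epsilon_c^n\le 0$ for every $c\in\mathbb{K}$; since constraint (b) gives $\epsilon_c^n\ge n_c\ge 0$, this forces $\epsilon_c^n=0$ and $n_c=0$ throughout $\mathbb{K}$. I would then recolor all of $\mathbb{K}$ black and verify that the modified dicriticity is still admissible. The only constraints that can change are those of $\mathbb{K}$ and of its boundary --- every neighbour of $\mathbb{K}$ outside $\mathbb{K}$ is black by maximality of the component --- so the verification reduces to computing $\tilde\epsilon_c-\epsilon_c^n$ on $\mathbb{K}$ and its boundary and checking condition (a) there: inside $\mathbb{K}$ the recolored vertices must clear the threshold $2-\sum_{c'\in\voisin c}\Delta_{c'}$, and on the boundary the black vertices see both this threshold and their own configuration move. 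This re-verification, governed again by the parity of $\nu_c^n-\delta_c$ and by the vanishing $n_c=0$ on $\mathbb{K}$, is the main obstacle; once it is carried out, the modified dicriticity is admissible and distinct from $\Delta^n$, contradicting (\ref{Existence.Unicite.Saito}) and establishing the claim.
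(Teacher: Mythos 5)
There is a genuine gap, and it lies at the structural heart of the theorem: your proposal inverts the logical dependency that makes the paper's induction work. You propose to establish part (\ref{Existence.Unicite.Saito}) first, by a greedy induction along a linear extension of $\leq$, ``checking at each vertex that conditions (a)--(b) admit exactly one value of $\Delta_c$ compatible with the colors already fixed,'' and then to derive the $\Theta$-identities (\ref{Valeur de Theta1})--(\ref{Valeur de Theta11}) afterwards. But the admissibility conditions at a vertex $c$ cannot be evaluated from the colors already fixed in any linear order: $\epsilon_c$ depends on $\square_s$ for the vertices $s$ having $c$ as a parent (hence on $\Delta_s$ and on the colors of the \emph{other} parents of those $s$), and condition (a) involves $\sum_{c'\in\voisin c}\Delta_{c'}^{n}$ over all neighbours of $c$. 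Already for the two-vertex tree the constraint at the root depends on the color of the top vertex and vice versa, so there is no local dichotomy to propagate. The paper resolves this global coupling by proving all the properties in one simultaneous induction: it splits $\AA$ at the root into components $\AA^{c}$, applies part (\ref{Existence.Unicite.Saito}) to each component with \emph{two} numberings ($n$ and the numbering where the root-adjacent vertex is incremented), glues these Saito dicriticities into two candidates $\Delta^{0}$, $\Delta^{1}$, and then uses the part-(\ref{propriete.theta1}) identity $\Tun{v}{\mathbb{A}^{0,v}}{n}=-\left|\Aacces{}v{}\setminus\{r\}\right|/2-\Delta_{v}^{0}$ to obtain $\epsilon_{r}^{0}+\epsilon_{r}^{1}=n_{r}+1-\sum_{v\in\voisin r}\Delta_{v}^{0}$, which is exactly what forces one and only one of the two root conditions to hold. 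So property (\ref{propriete.theta1}) is an \emph{input} to property (\ref{Existence.Unicite.Saito}), not a consequence of it; the dependency ordering you propose cannot be carried out. The same objection applies to your treatment of (\ref{propriete.theta1})--(\ref{Mixed.branch}): the sums defining $\Tun c{\mathbb{A}}n$, $\Teu{c_{0}}{c_{1}}{\mathbb{A}}n$, $\Tnu{c_{0}}{c_{1}}{\mathbb{A}}n$ mix the square indices of two \emph{a priori unrelated} Saito dicriticities (for $n$ and for $c\cdot n$), and ``telescoping with parity flips'' presupposes precisely the control on how these two dicriticities differ that the mixed-branch analysis of (\ref{Mixed.branch}) is designed to provide; in the paper, (\ref{propriete.theta1}), (\ref{propriete.theta2}) and (\ref{Mixed.branch}) are proved in a mutually recursive induction, not in sequence.

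Your argument for (\ref{component.dicritique}) is a genuinely different idea from the paper's (the paper derives a direct contradiction $0=\pair 1{\frac{1}{2}}{\nu_{r}^{n}}+(\textup{nonnegative})$ by recursively expanding $\epsilon_{r}^{n}=0$, rather than recoloring and invoking uniqueness), and as a strategy it is attractive; but its key locality claim is false. You assert that recoloring $\mathbb{K}$ black only perturbs the constraints of $\mathbb{K}$ and of its tree-neighbours. However, $\epsilon_c$ depends on colors through the proximity relation, not through adjacency, and a parent need not be a neighbour: in the paper's first example one has $1\in\parent 3$ while $\voisin 3=\{4\}$, so recoloring vertex $1$ changes $\delta_3$, hence $\square_3$, hence the configuration at vertices that are not tree-neighbours of $1$. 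Consequently the set of constraints you must re-verify is larger than claimed, and that re-verification --- which you yourself defer as ``the main obstacle'' --- is the entire content of the proof; nothing in the proposal shows the recolored dicriticity is admissible, so no contradiction with uniqueness is actually obtained.
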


Property (\ref{Existence.Unicite.Saito}) of Theorem \ref{Theoreme.Un}
has already been proved in \cite{moduligenz} for the particular case
of a tree $\AA$ for which $\rho_{c}=1$ for any $c\in\AA.$
\begin{example}
Let us consider the \emph{cusp tree, }that is the tree in Figure \ref{fig : cusp}
numbered by $n_{1}=n_{2}=0$ and $n_{3}=1.$ The order is defined
by the relations $1\leq2$ and $1\leq3$.

The proximity matrix is 
\[
\mathbb{P}=\left(\begin{array}{ccc}
1 & -1 & -1\\
0 & 1 & -1\\
0 & 0 & 1
\end{array}\right)
\]
Figure \ref{fig : cusp} presents also the associated Saito dicriticity
represented by the coloring. 

\begin{figure}
\begin{centering}
\includegraphics[scale=0.4]{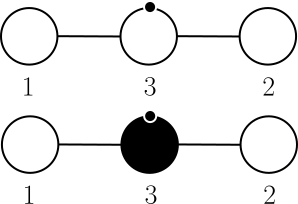}
\par\end{centering}
\caption{\label{fig : cusp}The \emph{cusp} tree and its Saito dicriticity}
\end{figure}

Table (\ref{tab:Dicriticities-and-configurations}) shows the various
configurations obtained from the $8=2^{3}$ possible dicriticities
on $\mathbb{\AA}.$ For each configuration, we highlight by the notation
$\left\langle \cdot\right\rangle $ a part of the configuration that
violates one of the admissibility conditions. At the end, the unique
and thus Saito dicriticity that satisfies all the three admissibility
conditions is $\left(1,1,0\right)$ for which 
\begin{align*}
\epsilon_{1}=1\geq0 & ,\ \Delta_{1}=1\\
\epsilon_{2}=1\geq0 & ,\ \Delta_{2}=1\\
\epsilon_{3}=0\geq2-\Delta_{1}-\Delta_{2} & ,\ \Delta_{3}=0.
\end{align*}
Notice that, $1$ and $2$ are two components of $\AA\setminus\left\{ 3\right\} $
for which $\epsilon_{1}>0$ and $\epsilon_{2}>0$, as predicted by
the property (\ref{component.dicritique}) of Theorem \ref{Theoreme.Un}.

\begin{table}
\begin{centering}
\begin{tabular}{cc}
$\Delta=\left(\Delta_{1},\Delta_{2},\Delta_{3}\right)$ & $\epsilon=\left(\epsilon_{1},\epsilon_{2,}\epsilon_{3}\right)$\tabularnewline
\hline 
$0,0,0$ & $\left\langle -1\right\rangle ,0,1$\tabularnewline
\noalign{\vskip\doublerulesep}
\hline 
$0,0,1$ & $\left\langle -1\right\rangle ,0,1$\tabularnewline
\noalign{\vskip\doublerulesep}
\hline 
$0,1,0$ & $\left\langle 0\right\rangle ,1,1$\tabularnewline
\noalign{\vskip\doublerulesep}
\hline 
$0,1,1$ & $\left\langle -1\right\rangle ,0,1$\tabularnewline
\noalign{\vskip\doublerulesep}
\hline 
$1,0,0$ & $2,\left\langle 0\right\rangle ,0$\tabularnewline
\noalign{\vskip\doublerulesep}
\hline 
$1,0,1$ & $2,\left\langle 0\right\rangle ,1$\tabularnewline
\noalign{\vskip\doublerulesep}
\hline 
$1,1,0$ & $1,1,0$\tabularnewline
\noalign{\vskip\doublerulesep}
\hline 
$1,1,1$ & $1,1,\left\langle 0\right\rangle $\tabularnewline
\noalign{\vskip\doublerulesep}
\end{tabular}
\par\end{centering}
\caption{\label{tab:Dicriticities-and-configurations}Dicriticities and configurations
for the cusp tree numbered by $\left(0,0,1\right).$}
\end{table}
\end{example}

\begin{example}
Suppose that $\AA$ is a tree reduced to two vertices. Its proximity
matrix is
\[
\text{\ensuremath{\mathbb{P}}}=\left(\begin{array}{cc}
1 & -1\\
0 & 1
\end{array}\right).
\]
Figure \ref{fig:Unique-admissible-choice} presents the Saito dicriticity
$\Delta=\left(\star,\star\right)\in\left\{ 0,1\right\} ^{2}$ depending
on the values $n_{1}$ and $n_{2}$.
\begin{figure}
\begin{centering}
\includegraphics[scale=0.5]{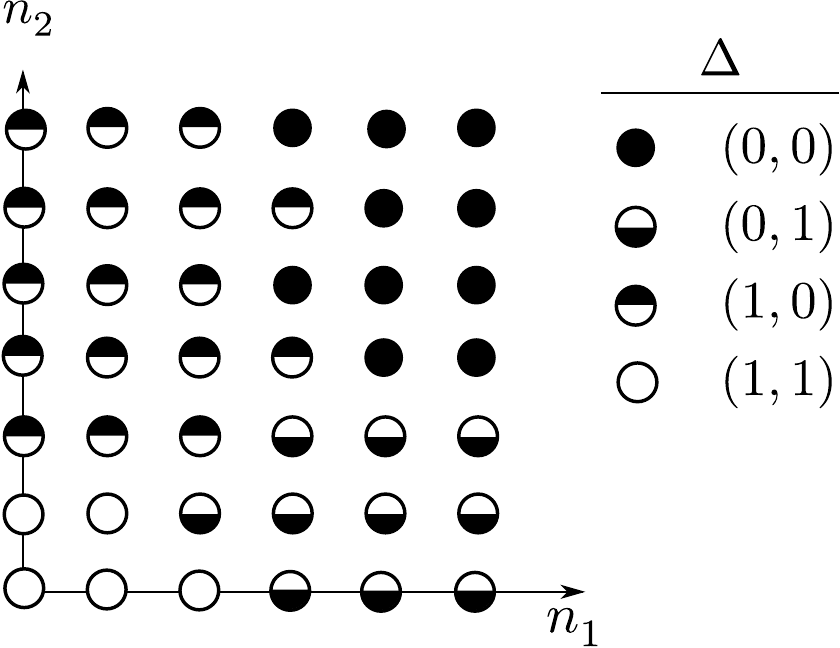}
\par\end{centering}
\caption{\label{fig:Unique-admissible-choice}Unique admissible choice of $\Delta=\left(\Delta_{1},\Delta_{2}\right).$}
\end{figure}
\end{example}

Let $c\in\AA.$ Along the row associated to $c$ in the matrix $\mathbb{P}$,
any occurrence of a coefficient $-1$ corresponds to a vertex that
belongs to the access tree from $r$ to $v$ for some $v$ in the
neighborhood $\mathfrak{v}_{r}.$ This remark leads to the following
expression of $\epsilon_{c}$ that is going to be used extensively
in the sequel, 
\begin{align}
\epsilon_{c} & =\frac{n_{c}}{2}-\square_{c}+\sum_{v\in\voisin c}\sum_{s\in\Aacces cv{}\setminus\left\{ c\right\} }\square_{s}.\nonumber \\
 & =\frac{n_{c}}{2}-\frac{\delta_{c}}{2}+\pair{\Delta_{c}}{\frac{1}{2}}{\nu_{c}^{n}-\delta_{c}}+\sum_{v\in\voisin c}\sum_{s\in\Aacces cv{}\setminus\left\{ c\right\} }\frac{\delta_{s}}{2}-\pair{\Delta_{s}}{\frac{1}{2}}{\nu_{s}^{n}-\delta_{s}}.\label{Formule fondamentale}
\end{align}

\begin{proof}[Proof of Theorem \ref{Theoreme.Un}.]
The proof is, as a whole, an induction on the number of vertices
in $\AA.$ 

Suppose that $\left|\AA\right|=1$. The proximity matrix is $\mathbb{P}=\left(1\right)$
and the numbering $n=\left(n_{r}\right).$ In view of (\ref{Formule fondamentale}),
we get 
\begin{align*}
\epsilon_{r} & =\frac{n_{r}}{2}-\square_{r}=\frac{n_{r}}{2}-\left(\frac{\delta_{r}}{2}-\pair{\Delta_{r}}{\frac{1}{2}}{\nu_{r}^{n}-\delta_{r}}\right)\\
 & =\frac{n_{r}}{2}+\pair{\Delta_{r}}{\frac{1}{2}}{n_{r}}
\end{align*}
since $\delta_{r}=0$ and $\nu_{r}^{n}=n_{r}.$ Suppose $n_{r}=0,1$
or $2.$ Then it can be seen that $\Delta_{r}=0$ is not admissible,
since it would impose that $\epsilon_{r}\geq2,$ which is not true.
However, if $\Delta_{r}=1,$ for $n_{r}=0,1$ or $2,$ we have respectively
$\epsilon_{r}=1,\ 1$ and $2$ that always satisfies $\epsilon_{r}\geq n_{r}.$
Thus for $n_{r}=0,1$ or $2$, the Saito dicriticity is defined by
$\Delta_{r}^{n_{r}}=1$. To the contrary, if $n_{r}\geq3$ then 
\[
\epsilon_{r}\leq\frac{n_{r}}{2}+1<n_{r}
\]
thus $\Delta_{r}=1$ is not an admissible dicriticity. However, $\Delta_{r}^{n_{r}}=0$
is admissible since 
\[
\epsilon_{r}=\frac{n_{r}}{2}+\pair 0{\frac{1}{2}}{n_{r}}\geq2,
\]
which concludes the proof of the property (\ref{Existence.Unicite.Saito})
for $\left|\AA\right|=1.$

Now consider property (\ref{propriete.theta1}) and the invariant
$\Tun r{\AA}{n_{r}}$. By specifying the definition, we obtain
\begin{align*}
\Tun r{\AA}n & =\square_{r}^{n}+\square_{r}^{r\cdot n}\\
 & =\frac{\delta_{r}^{n}}{2}-\pair{\Delta_{r}^{n}}{\frac{1}{2}}{\nu_{r}^{n}-\delta_{r}^{n}}+\frac{\delta_{r}^{r\cdot n}}{2}-\pair{\Delta_{r}^{r\cdot n}}{\frac{1}{2}}{\nu_{r}^{r\cdot n}-\delta_{r}^{n}}
\end{align*}

Notice that $\delta_{r}^{n}=\delta_{r}^{r\cdot n}=0$, $\nu_{r}^{n}=n_{r}$
and $\nu_{r}^{r\cdot n}=n_{r}+1$. Consequently, this gives
\[
\Tun r{\AA}n=-\pair{\Delta_{r}^{n}}{\frac{1}{2}}{n_{r}}-\pair{\Delta_{r}^{r\cdot n}}{\frac{1}{2}}{n_{r}+1}=-\frac{1}{2}-\pair{\Delta_{r}^{n}}{\Delta_{r}^{r\cdot n}}{n_{r}}
\]
If $n_{r}=0$ or $1$ then $\Delta_{r}^{n}=\Delta_{r}^{r\cdot n}=1$
and thus the invariant is written 
\[
\Tun r{\AA}n=-\frac{3}{2}=-\frac{\left|\mathbb{A}_{r}\right|}{2}-\Delta_{r}^{n}.
\]
If $n_{r}\geq3$ then $\Delta_{r}^{n}=\Delta_{r}^{r\cdot n}=0$ which
induces 
\[
\Tun r{\AA}n=-\frac{1}{2}=-\frac{\left|\mathbb{A}_{r}\right|}{2}-\Delta_{r}^{n}.
\]
Finally, if $n_{r}=2$ then $\Delta_{r}^{n}=1$ and $\Delta_{r}^{r\cdot n}=0$
and hence $\pair{\Delta_{r}^{n}}{\Delta_{r}^{r\cdot n}}{n_{r}}=1$.
Therefore, it follows
\[
\Tun r{\AA}n=-\frac{3}{2}=-\frac{\left|\mathbb{A}_{r}\right|}{2}-\Delta_{r}^{n}.
\]
Consequently, formula (\ref{Valeur de Theta1}) is true for $\left|\AA\right|=1$.
Now, the invariant $\Teu rr{\mathbb{A}}n$ is written
\[
\Teu rr{\AA}n=\square_{r}^{n}+\square_{r}^{r\cdot r\cdot n}=-\pair{\Delta_{r}^{n}}{\frac{1}{2}}{n_{r}}-\pair{\Delta_{r}^{r\cdot r\cdot n}}{\frac{1}{2}}{n_{r}+2}=-\pair{\Delta_{r}^{n}+\Delta_{r}^{r\cdot r\cdot n}}1{n_{r}}.
\]
If $n_{r}=0$ then it reduces to 
\[
-\pair{\Delta_{r}^{n}+\Delta_{r}^{r\cdot r\cdot n}}1{n_{r}}=-2=-\frac{1}{2}-\frac{\left|\text{\ensuremath{\Aacces{}r{}}}\right|}{2}-\Delta_{r}^{n}.
\]

If $n_{r}=1,2$ then we obtain
\[
-\pair{\Delta_{r}^{n}+\Delta_{r}^{r\cdot r\cdot n}}1{n_{r}}=-1=\frac{1}{2}-\frac{\left|\text{\ensuremath{\Aacces{}r{}}}\right|}{2}-\Delta_{r}^{n}.
\]
Finally, if $n_{r}\geq3,$ then we get
\[
-\pair{\Delta_{r}^{n}+\Delta_{r}^{r\cdot r\cdot n}}1{n_{r}}=-\pair 01{n_{r}}=\pm\frac{1}{2}-\frac{\left|\text{\ensuremath{\Aacces{}r{}}}\right|}{2}-\Delta_{r}^{n},
\]
thus formula (\ref{Valeur de Theta2}) holds. We continue in this
fashion obtaining the invariant $\Tnu rr{\AA}n$,
\[
\Tnu rr{\AA}n=\square_{r}^{r\cdot n}+\square_{r}^{r\cdot n}=-\pair{2\Delta_{r}^{r\cdot n}}1{n_{r}+1}.
\]
 If $n_{r}=0,1,2$ then $\pm\frac{1}{2}-\frac{\left|\text{\ensuremath{\Aacces{}r{}}}\right|}{2}-\Delta_{r}^{n}=-2\textup{ or }-1$.
This implies (\ref{Valeur de Theta11}). If $n_{r}\geq3,$ then we
get
\[
-\pair{2\Delta_{r}^{r\cdot n}}1{n_{r}+1}=-\pair 01{n_{r}+1}=\pm\frac{1}{2}-\frac{\left|\text{\ensuremath{\Aacces{}r{}}}\right|}{2}-\Delta_{r}^{n},
\]
which completes the proof of formula (\ref{Valeur de Theta11}). For
$\left|\AA\right|=1,$ there are only pure mixed branches of length
one. It is enough to refer to the computations above to obtain the
following correspondance : for $\AA=\left\{ r\right\} $, we get 
\begin{center}
\begin{tabular}{ccccc}
$n=\left(n_{r}\right)$ & $0$ & $1$ & $2$ & $\geq3$\tabularnewline
\hline 
$\Delta_{r}^{n}$ & $1$ & $1$ & $1$ & $0$\tabularnewline
\end{tabular},
\par\end{center}

which ensures the properties in Table \ref{Mixed.branch}. To conclude
the case $\left|\AA\right|=1,$we observe that property (\ref{component.dicritique})
follows from the computations of $\epsilon_{r}^{n}$ for $n_{r}=0,1$
and $2$. 

Now, we are going to prove inductively property (\ref{Existence.Unicite.Saito})
from properties (\ref{Existence.Unicite.Saito}) and (\ref{propriete.theta1}).\textcolor{black}{{}
Let us consider the $\left|\voisin r\right|$ graphs obtained as the
connected components of $\AA\setminus\left\{ r\right\} $. We index
these graphs by $\voisin r$ itself by denoting each connected component
$\AA^{c}$ for $c\in\voisin r$. Each tree $\AA^{c}$ inherits an
order from the one of $\AA$. Let us consider two different numberings
of each component $\AA^{c},$ $c\in\voisin r$. In the sequel, we
refer to these two different numbered trees by the notation $\AA^{\star,\nu}$
with $\star=0$ or 1. }
\begin{itemize}
\item $\star=0,\ $$\mathbb{A}^{0,c}=\AA^{c}$ numbered by the integer $n^{0}=\left(n_{s}^{0}\right)_{s\in\mathbb{A}^{0,c}}$
with $n_{s}^{0}=n_{s}$ for $s\in\mathbb{A}^{0,c}.$
\item $\star=1,~$$\mathbb{A}^{1,c}=\AA^{c}$ but numbered by the integer
$n^{1}=\left(n_{s}^{1}\right)_{s\in\mathbb{A}^{1,c}}$ with $n_{s}^{1}=n_{s}$
for $c\neq s\in\mathbb{A}^{1,c}$, and $n_{c}^{1}=n_{c}+1.$
\end{itemize}
Note that by construction the tree $\AA$ is obtained by gluing the
family of trees $\left(\AA^{\star,c}\right)_{c\in\voisin r}$ with
$\star=0$ or $1$ along the vertex $r$ adding an edge between each
vertex $c$ and the root $r$. Each vertex $s$ belongs exactly to
one of the trees $\AA^{\star,c}.$ Applying property (\ref{Existence.Unicite.Saito})
to each numbered tree $\mathbb{A}^{\star,c}$, we obtain a family
of dicriticities $\Delta^{\star,c}$, that consists in the family
of unique Saito dicriticities of the numbered trees $\mathbb{A}^{\star,c}.$
As a result, we can define two new distinct dicriticities on the whole
tree $\AA$ induced by the $\Delta^{\star,c},$ $c\in\voisin r$ the
following way :
\begin{itemize}
\item $\Delta^{1}$, $\Delta_{r}^{1}=1$ and for any $s\neq r$ $\Delta_{s}^{1}=\Delta_{s}^{1,c}$
if $s\in\AA^{1,c}$. 
\item $\Delta^{0}$, $\Delta_{r}^{0}=0$ and for any $s\neq r$ $\Delta_{s}^{0}=\Delta_{s}^{0,c}$
if $s\in\AA^{0,c}$.
\end{itemize}
We claim that both dicriticities $\Delta^{0}$ or $\Delta^{1}$ satisfy
the admissibility conditions for the vertices $s\in\AA\setminus\left\{ r\right\} .$
Indeed, let us denote $\star^{\star,c}$ the combinatorial datas resulting
from property (\ref{Existence.Unicite.Saito}) applied to each numbered
tree $\AA^{\star,c}$. We also denote simply by $\star^{0\textup{ or }1}$
the combinatorial datas associated respectively to the dicriticities
$\Delta^{0}$ or $\Delta^{1}.$

Let us focus first on the dicriticity $\Delta^{1}$. For $c\in\voisin r$
and $s\in\AA^{1,c}$, we get
\begin{equation}
\begin{array}{rl}
\textup{ if }s\notin\Aaccess{}c{1,c}\setminus\left\{ r\right\} , & \nu_{s}^{1,c}=\nu_{s}^{1},\ \delta_{s}^{1,c}=\delta_{s}^{1}\\
\textup{ if }s\in\Aaccess{}c{1,c}\setminus\left\{ r\right\} , & \nu_{s}^{1,c}=\nu_{s}^{1}+1,\ \delta_{s}^{1,c}=\delta_{s}^{1}-1
\end{array}\label{relation.induction}
\end{equation}
Note that in any case, $\nu_{s}^{1,c}-\delta_{s}^{1,c}$ and $\nu_{s}^{1}-\delta_{s}^{1}$
have the same parity. If $s\notin\Aaccess{}c{1,c}\setminus\left\{ r\right\} $,
relations (\ref{relation.induction}) combined with the construction
of $\Delta^{1}$ ensures that $\epsilon_{s}^{1,c}=\epsilon_{s}^{1}.$

If $s\in\Aaccess{}c{1,c}\setminus\left\{ r\right\} $ and $s\neq c$
then we obtain 
\begin{align*}
\epsilon_{s}^{1,c} & =\frac{n_{s}}{2}-\square_{s}^{1,c}+\sum_{v\in\voisin s}\sum_{u\in\Aacces sv{}\setminus\left\{ s\right\} }\square_{u}^{1,c}\\
 & =\frac{n_{s}}{2}-\square_{s}^{1,c}+\square_{c}^{1,c}+\sum_{v\in\voisin s}\sum_{u\neq c\in\Aacces sv{}\setminus\left\{ s\right\} }\square_{u}^{1,c}\\
 & =\frac{n_{s}}{2}-\frac{-1+\delta_{s}^{1}}{2}+\pair{\Delta_{s}^{1}}{\frac{1}{2}}{\nu_{s}^{1,c}-\delta_{s}^{1,c}}+\frac{-1+\delta_{c}^{1}}{2}-\pair{\Delta_{c}^{1}}{\frac{1}{2}}{\nu_{c}^{1,c}-\delta_{c}^{1,c}}+\sum_{v\in\voisin s}\sum_{u\neq c\in\Aacces sv{}\setminus\left\{ s\right\} }\square_{u}^{1,c}\\
 & =\frac{n_{s}}{2}-\frac{\delta_{s}^{1}}{2}+\pair{\Delta_{s}^{1}}{\frac{1}{2}}{\nu_{s}^{1}-\delta_{s}^{1}}+\frac{\delta_{c}^{1}}{2}-\pair{\Delta_{c}^{1}}{\frac{1}{2}}{\nu_{c}^{1}-\delta_{c}^{1}}+\sum_{v\in\voisin s}\sum_{u\neq c\in\Aacces sv{}\setminus\left\{ s\right\} }\square_{u}^{1}\\
 & =\epsilon_{s}^{1}.
\end{align*}
If $s=c$, it follows from the numbering of $\AA^{1,c}$ that
\begin{align*}
\epsilon_{c}^{1,c} & =\frac{n_{c}+1}{2}-\square_{c}^{1,c}+\sum_{v\in\voisin c}\sum_{u\in\Aacces sv{}\setminus\left\{ s\right\} }\square_{u}^{1,c}\\
 & =\frac{n_{c}+1}{2}-\frac{\delta_{c}^{1,c}}{2}+\pair{\Delta_{c}^{1,c}}{\frac{1}{2}}{\nu_{c}^{1,c}-\delta_{c}^{1,c}}+\sum_{v\in\voisin c}\sum_{u\in\Aacces cv{}\setminus\left\{ c\right\} }\square_{u}^{1,c}\\
 & =\frac{n_{c}+1}{2}-\frac{-1+\delta_{c}^{1}}{2}+\pair{\Delta_{c}^{1}}{\frac{1}{2}}{\nu_{c}^{1}-\delta_{c}^{1}}+\sum_{v\in\voisin c}\sum_{u\in\Aacces cv{}\setminus\left\{ c\right\} }\square_{u}^{1}\\
= & \epsilon_{c}^{1}+1.
\end{align*}
Since the configuration $\left(\epsilon^{1,c}\right)_{s}$ is admissible
for $\AA^{1,c}$, $\epsilon_{s}^{1}$ satifies the admissibility conditions
for $s\neq c$. For $s=c,$ if $\Delta_{c}^{1}=1$ then we get the
following inequality
\[
\epsilon_{c}^{1}=\epsilon_{c}^{1,c}-1\geq n_{c}^{1}+1-1\geq n_{c}^{1},
\]
and if $\Delta_{c}^{1}=0$ then the relation becomes
\begin{align*}
\epsilon_{c}^{1} & =\epsilon_{c}^{1,c}-1\geq\left(2-\sum_{s\in\voisin c\setminus\left\{ c\right\} }\Delta_{s}^{1,c}\right)-1\\
 & \geq\left(2-\sum_{s\in\voisin c\setminus\left\{ c\right\} }\Delta_{s}^{1}\right)-\Delta_{r}^{1}\\
 & \geq2-\sum_{s\in\voisin c}\Delta_{s}^{1}.
\end{align*}
Thus, in any case, the configuration $\epsilon^{1}$ is admissible
for $s\neq r$. Using much the same computations, we can prove that
$\epsilon^{0}$ is also admissible for $s\neq r.$ 

However, we are going to prove that exactly one dicriticity among
$\Delta^{0}$ and $\Delta^{1}$ satisfies the admissibility condition
for $s=r.$ Indeed, we have
\begin{align*}
\epsilon_{r}^{0}+\epsilon_{r}^{1} & =\frac{n_{r}}{2}-\square_{r}^{0}+\left(\sum_{v\in\voisin r}\sum_{s\in\Aacces{}v{}\setminus\left\{ r\right\} }\square_{s}^{0}\right)+\frac{n_{r}}{2}-\square_{r}^{1}+\left(\sum_{v\in\voisin c}\sum_{s\in\Aacces cv{}\setminus\left\{ c\right\} }\square_{s}^{1}\right)\\
 & =n_{r}+\pair 0{\frac{1}{2}}{\nu_{r}}+\pair 1{\frac{1}{2}}{\nu_{r}}+\sum_{v\in\voisin r}\sum_{s\in\Aacces{}v{}\setminus\left\{ r\right\} }\square_{s}^{0}+\square_{s}^{1}\\
 & =n_{r}+1+\sum_{v\in\voisin r}\sum_{s\in\Aacces{}v{}\setminus\left\{ r\right\} }\square_{s}^{0}+\square_{s}^{1}.
\end{align*}

Now, if $v\in\voisin r$ and $s\in\Aacces rv{}\setminus\left\{ r\right\} $
one has 
\[
\square_{s}^{0}+\square_{s}^{1}=\frac{\delta_{s}^{0}}{2}-\pair{\Delta_{s}^{0}}{\frac{1}{2}}{\nu_{s}^{0}-\delta_{s}^{0}}+\frac{\delta_{s}^{1}}{2}-\pair{\Delta_{s}^{1}}{\frac{1}{2}}{\nu_{s}^{1}-\delta_{s}^{1}}
\]
The construction of $\Delta^{\star}$ and the relations (\ref{relation.induction})
force
\[
\square_{s}^{0}+\square_{s}^{1}=\frac{1}{2}+\square_{s}^{0,s}+\square_{s}^{1,s}
\]
which leads to 
\begin{align*}
\epsilon_{r}^{0}+\epsilon_{r}^{1} & =n_{r}+1+\sum_{v\in\voisin r}\sum_{s\in\Aacces{}v{}\setminus\left\{ r\right\} }\frac{1}{2}+\square_{s}^{0,s}+\square_{s}^{1,s}\\
 & =n_{r}+1+\sum_{v\in\voisin r}\frac{\left|\Aacces{}v{}\setminus\left\{ r\right\} \right|}{2}+\Tun v{\mathbb{A}^{0,v}}n
\end{align*}
Notice that in the tree $\AA^{0,\nu}$ the vertex $\nu$ is of multiplicity
$1$. Property (\ref{propriete.theta1}) gives the relation 
\[
\Tun{\nu}{\mathbb{A}^{0,\nu}}n=-\frac{\left|\Aacces{}v{}\setminus\left\{ r\right\} \right|}{2}-\Delta_{v}^{0}.
\]
and the sum $\epsilon_{r}^{0}+\epsilon_{r}^{1}$ reduces to
\[
\epsilon_{r}^{0}+\epsilon_{r}^{1}=n_{r}+1-\sum_{v\in\voisin r}\Delta_{v}^{0}.
\]
Finally, the above equality ensures that one of the two conditions
\[
\epsilon_{r}^{1}\geq n_{r}\qquad\textup{ or }\qquad\epsilon_{r}^{0}\geq2-\sum_{v\in\voisin r}\Delta_{v}^{0}
\]
holds but not both. As a consequence, either $\Delta^{1}$ or $\Delta^{0}$
is admissible for $s=r,$ but not both. That concludes the proof of
property (\ref{Existence.Unicite.Saito}) for the tree $\mathbb{A}.$

Now, we are going to prove property (\ref{propriete.theta1}) inductively
from properties (\ref{Existence.Unicite.Saito}), (\ref{propriete.theta1})
and (\ref{Mixed.branch}). Suppose first that the Saito dicricities
respectively associated to $n$ and $c\cdot n$ start with \includegraphics[viewport=0bp 29.92366bp 118bp 98bp,scale=0.15]{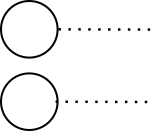}
, then the invariant $\Tun c{\AA}n$ is written 
\begin{align*}
\Tun c{\AA}n & =\sum_{s\in\Aacces{}c{}}\square_{s}^{n}+\square_{s}^{c\cdot n}\\
 & =\square_{r}^{n}+\square_{r}^{c\cdot n}+\sum_{s\in\Aacces{}c{},\ s\neq r}\square_{s}^{n}+\square_{s}^{c\cdot n}\\
 & =-\pair 1{\frac{1}{2}}{\nu_{r}^{n}}-\pair 1{\frac{1}{2}}{\nu_{r}^{n}+1}+\underbrace{\frac{\delta_{r^{+}}^{n}}{2}+\frac{\delta_{r^{+}}^{c\cdot n}}{2}}_{=1}-\left(\frac{\delta_{r^{+}}^{n}}{2}+\frac{\delta_{r^{+}}^{c\cdot n}}{2}\right)+\sum_{s\in\Aacces{}c{},\ s\neq r}\square_{s}^{n}+\square_{s}^{c\cdot n}\\
 & =-\frac{1}{2}-\left(\frac{\delta_{r^{+}}^{n}}{2}+\frac{\delta_{r^{+}}^{c\cdot n}}{2}\right)+\sum_{s\in\Aacces{}c{},\ s\neq r}\square_{s}^{n}+\square_{s}^{c\cdot n}
\end{align*}
where $r^{+}$ is the successor of $r$ in the branch $\AA_{c}.$
Consider the tree $\AA^{r^{+}}$connected composent of $r^{+}$ in
$\AA\setminus\left\{ r\right\} $. The inherited order of $\AA^{r{{}^+}}$
makes of $r^{+}$ its root. Let $s_{0}$ be the vertex in the neighbobrhood
$\voisin r$ of $r$ in $\AA$ such that $s_{0}\geq r^{+}.$ Note
that from the unicity statement of property (\ref{Existence.Unicite.Saito})
inductively applied to $\AA^{r^{+}}$, the dicriticity of $\AA^{r^{+}}$
inherited from the Saito dicriticity of $\AA$ numbered by $n$ is
the Saito dicriticity of $\AA^{r^{+}}$ numbered by the numbering
inherited from the $s_{0}\cdot n$. Applying inductively property
(\ref{propriete.theta1}) to the tree $\AA^{r^{+}}$, we get 
\begin{align*}
-\left(\frac{\delta_{r^{+}}^{n}}{2}+\frac{\delta_{r^{+}}^{c\cdot n}}{2}\right)+\sum_{s\in\Aacces{}c{},\ s\neq r}\square_{s}^{n}+\square_{s}^{c\cdot n} & =\sum_{s\in\Aacces{}c{r^{+}},}\square_{s}^{s_{0}\cdot n}+\square_{s}^{c\cdot s_{0}\cdot n}\\
 & =\Tun c{\AA^{r^{+}}}{s_{0}\cdot n}\\
 & =-\frac{\left|\Aacces{}c{}\right|-1}{2}-\Delta_{c}^{s_{0}\cdot n}.
\end{align*}
Combining the two above relations, we are lead to 
\[
\Tun c{\AA}n=-\frac{1}{2}-\frac{\left|\Aacces{}c{}\right|-1}{2}-\Delta_{c}^{s_{0}\cdot n}=-\frac{\left|\Aacces{}c{}\right|}{2}-\Delta_{c}^{n},
\]
which is property (\ref{propriete.theta1}). Now, if the Saito dicricities
associated to $n$ and $c\cdot n$ start with \includegraphics[viewport=0bp 29.92366bp 113bp 98bp,scale=0.15]{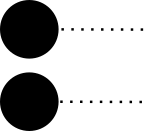}
, then the invariant $\Tun c{\AA}n$ becomes
\begin{align*}
\Tun c{\AA}n & =\sum_{s\in\Aacces{}c{}}\square_{s}^{n}+\square_{s}^{c\cdot n}\\
 & =\square_{r}^{n}+\square_{r}^{c\cdot n}+\sum_{s\in\Aacces{}c{},\ s\neq r}\square_{s}^{n}+\square_{s}^{c\cdot n}\\
 & =-\pair 0{\frac{1}{2}}{\nu_{r}^{n}}-\pair 0{\frac{1}{2}}{\nu_{r}^{n}+1}+\underbrace{\frac{\delta_{r^{+}}^{n}}{2}+\frac{\delta_{r^{+}}^{c\cdot n}}{2}}_{=0}-\left(\frac{\delta_{r^{+}}^{n}}{2}+\frac{\delta_{r^{+}}^{c\cdot n}}{2}\right)+\sum_{s\in\Aacces{}c{},\ s\neq r}\square_{s}^{n}+\square_{s}^{c\cdot n}\\
 & =-\frac{1}{2}-\left(\frac{\delta_{r^{+}}^{n}}{2}+\frac{\delta_{r^{+}}^{c\cdot n}}{2}\right)+\sum_{s\in\Aacces{}c{},\ s\neq r}\square_{s}^{n}+\square_{s}^{c\cdot n}
\end{align*}
As above, applying inductively property (\ref{propriete.theta1})
yields 
\begin{align*}
-\left(\frac{\delta_{r^{+}}^{n}}{2}+\frac{\delta_{r^{+}}^{c\cdot n}}{2}\right)+\sum_{s\in\Aacces{}c{},\ s\neq r}\square_{s}^{n}+\square_{s}^{c\cdot n} & =\sum_{s\in\Aacces{}c{r^{+}},}\square_{s}^{n}+\square_{s}^{c\cdot n}\\
 & =\Tun c{\AA^{r^{+}}}n.\\
 & =-\frac{\left|\Aacces{}c{}\right|-1}{2}-\Delta_{c}^{n}.
\end{align*}
As before, the two above relations lead to 
\[
\Tun c{\AA}n=-\frac{1}{2}-\frac{\left|\Aacces{}c{}\right|-1}{2}-\Delta_{c}^{n}=-\Delta_{c}^{n}-\frac{\left|\Aacces{}c{}\right|}{2},
\]
which is the desired property. We now turn to the case in which the
Saito dicricities associated to $n$ and $c\cdot n$ start with \includegraphics[viewport=0bp 29.92366bp 116bp 98bp,scale=0.15]{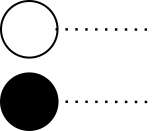}
. Hence, we are in the presence of a mixed or pure mixed branch. Suppose
first that $\left|\AA_{c}\right|=1$. In that case, the branch is
pure and the invariant $\Tun c{\AA}n$ reduces to
\[
\Tun c{\AA}n=-\pair{\Delta_{r}^{n}}{\frac{1}{2}}{\nu_{r}^{n}}-\pair{\Delta_{r}^{r\cdot n}}{\frac{1}{2}}{\nu_{r}^{r\cdot n}}=-\pair 1{\frac{1}{2}}{\nu_{r}^{n}}-\pair 0{\frac{1}{2}}{\nu_{r}^{n}+1}.
\]
From Table \ref{Mixed.branch}, we get 
\[
\Tun c{\AA}n=-1-\frac{1}{2}=-\frac{\left|\AA_{c}\right|}{2}-\Delta_{c}^{n}.
\]

Suppose now that $\left|\AA_{c}\right|\geq2$. In that case, along
the mixed branch, the nature of the square index allows us to simplify
the expression of the invariant $\Tun c{\AA}n$. Suppose that $s$
and $s^{\prime}$ are consecutive vertices $s\leq s^{\prime}$ in
$\Aacces{}c{}$ with 
\begin{equation}
\Delta_{\star}^{n}+\Delta_{\star}^{c\cdot n}=1,\ \star=s,\ s^{\prime}.\label{eq:1}
\end{equation}
Then, evaluating the square indeces at $s^{\prime}$ yields 
\[
\square_{s^{\prime}}^{n}+\square_{s^{\prime}}^{c\cdot n}=\frac{\delta_{s^{\prime}}^{n}}{2}+\frac{\delta_{s^{\prime}}^{c\cdot n}}{2}-\pair{\Delta_{s^{\prime}}^{n}}{\frac{1}{2}}{\nu_{s^{\prime}}^{n}-\delta_{s^{\prime}}^{n}}-\pair{\Delta_{s^{\prime}}^{c\cdot n}}{\frac{1}{2}}{\nu_{s^{\prime}}^{c\cdot n}-\delta_{s^{\prime}}^{c\cdot n}}.
\]
Now, according to the relations (\ref{eq:1}) one has $\delta_{s^{\prime}}^{n}+\delta_{s^{\prime}}^{c\cdot n}=1$,
hence we obtain
\begin{align}
\square_{s^{\prime}}^{n}+\square_{s^{\prime}}^{c\cdot n} & =\frac{1}{2}-\pair{\Delta_{s^{\prime}}^{n}}{\frac{1}{2}}{\nu_{s^{\prime}}^{n}-\delta_{s^{\prime}}^{n}}-\pair{1-\Delta_{s^{\prime}}^{n}}{\frac{1}{2}}{\nu_{s^{\prime}}^{n}+1-\delta_{s^{\prime}}^{n}-1}.\nonumber \\
\square_{s^{\prime}}^{n}+\square_{s^{\prime}}^{c\cdot n} & =-\frac{1}{2}\label{eq:simplification}
\end{align}
Let us focus nows on a mixed branch is of type \includegraphics[viewport=0bp 29.92126bp 299bp 95bp,width=0.08\paperwidth]{path1}$\ $.
Let $m_{c}$ be the last vertex of the branch $\AA_{c}$ where the
mixing property (\ref{eq:1}) holds. Using the simplification (\ref{eq:simplification}),
we obtain the following expression 
\begin{align*}
\Tun c{\AA}n & =\sum_{s\in\AA_{c}}\square_{s}^{n}+\square_{s}^{c\cdot n}\\
 & =\square_{r}^{n}+\square_{r}^{c\cdot n}+\sum_{s\in\AA_{m_{c}}\setminus\left\{ r\right\} }\square_{s}^{n}+\square_{s}^{c\cdot n}+\square_{m_{c}^{+}}^{n}+\square_{m_{c}^{+}}^{c\cdot n}+\sum_{s>m_{c}^{+},\ s\in\AA_{c}}\square_{s}^{n}+\square_{s}^{c\cdot n}\\
 & =-\pair 0{\frac{1}{2}}{\nu_{r}}-\pair 1{\frac{1}{2}}{\nu_{r}+1}-\frac{\left|\AA_{m_{c}}\right|-1}{2}+\frac{1}{2}-\pair 1{\frac{1}{2}}{\nu_{m_{c}^{+}}-1}-\pair 1{\frac{1}{2}}{\nu_{m_{c}^{+}}+1}+1+\Tun c{\AA^{m_{c}^{+}}}{s_{0}\cdot n}
\end{align*}
where $\AA^{m_{c}^{+}}$ is the subtree of $\AA$ whose root is $m_{c}^{+}$
and $s_{0}$ is the vertex in $\voisin r$ such that $s_{0}\geq m_{c}^{+}$.
Following Table \ref{Parit=0000E9 de nu}, we are lead to 
\begin{align*}
\Tun c{\AA}n & =-\frac{1}{2}-1-\frac{\left|\AA_{m_{c}}\right|-1}{2}+\frac{1}{2}-1+1+\Tun c{\AA^{m_{c}^{+}}}{s_{0}\cdot n}\\
 & =-\frac{1}{2}-\frac{\left|\AA_{m_{c}}\right|}{2}+\Tun c{\AA^{m_{c}^{+}}}{s_{0}\cdot n}.
\end{align*}
Applying inductively property (\ref{propriete.theta1}), we obtain
\begin{align*}
\Tun c{\AA}n & =-\frac{1}{2}-\frac{\left|\AA_{m_{c}}\right|}{2}-\frac{\left|\Aacces{m_{c}^{+}}c{}\right|}{2}-\Delta_{c}^{n}\\
 & =-\frac{\left|\Aacces{}c{}\right|}{2}-\Delta_{c}^{n},
\end{align*}
which is Property (\ref{propriete.theta1}). 

Suppose now the mixed branch is pure of type \includegraphics[viewport=0bp 29.92126bp 161bp 95bp,scale=0.2]{path\lyxdot 11}$\ $.
Then the invariant $\Tun c{\AA}n$ is written
\begin{align*}
\Tun c{\AA}n & =\sum_{s\in\AA_{c}}\square_{s}^{n}+\square_{s}^{c\cdot n}=\square_{r}^{n}+\square_{r}^{c\cdot n}+\left(\sum_{s\in\AA_{m_{c}}\setminus\left\{ r\right\} }\square_{s}^{n}+\square_{s}^{c\cdot n}\right)\\
 & =-\pair 1{\frac{1}{2}}{\nu_{r}}-\pair 0{\frac{1}{2}}{\nu_{r}+1}-\frac{\left|\AA_{m_{c}}\right|-1}{2}.
\end{align*}
According to Table (\ref{Parit=0000E9 de nu}), $\nu_{r}$ is even.
Thus 
\begin{align*}
\Tun c{\AA}n & =-1-\frac{\left|\AA_{m_{c}}\right|}{2}=-\Delta_{c}^{n}-\frac{\left|\AA_{m_{c}}\right|}{2}
\end{align*}
which is still property (\ref{propriete.theta1}). Any other type
of mixed or pure mixed branch can be treated exactly the same way. 

Now, we will prove inductively property (\ref{propriete.theta2})
from properties (\ref{propriete.theta1}) and (\ref{propriete.theta2}).
In the branch $\AA_{c_{1}},$ we denote by $r^{+}$ the successor
of $r$. Moreover, we denote by $d\in\voisin r$ such that $d\geq r^{+}.$
Depending on how start the Saito dicriticity of $\mathbb{A}$ numbered
respectively by $n$ and $c_{0}\cdot c\cdot n$, below, we expand
the expression of the invariant $\Teu{c_{0}}{c_{1}}{\AA}n$.
\begin{itemize}
\item \includegraphics[viewport=0bp 37.4046bp 118bp 98bp,scale=0.2]{path\lyxdot 20}
\begin{align*}
\Teu{c_{0}}{c_{1}}{\AA}n & =\square_{r}^{n}+\square_{r}^{c_{0}\cdot c_{1}\cdot n}+\sum_{s\in\Aacces{}{c_{1}}{}\setminus\left\{ r\right\} }\square_{s}^{n}+\square_{s}^{c_{0}\cdot c_{1}\cdot n}\\
 & =-\pair 1{\frac{1}{2}}{\nu_{r}}-\pair 1{\frac{1}{2}}{\nu_{r}+2}+\sum_{s\in\Aacces{}{c_{1}}{}\setminus\left\{ r\right\} }\square_{s}^{d\cdot n}+\square_{s}^{c_{1}\cdot d\cdot n}\\
 & =-\pair 1{\frac{1}{2}}{\nu_{r}}-\pair 1{\frac{1}{2}}{\nu_{r}+2}+\frac{1}{2}+\frac{1}{2}+\Tun{c_{1}}{\AA^{r^{+}}}{d\cdot n}\\
 & =-\pair 10{\nu_{r}}+\Tun{c_{1}}{\AA^{r^{+}}}{d\cdot n}\\
 & =-\pair 10{\nu_{r}}-\Delta_{c_{1}}^{n}-\frac{\left|\AA_{c_{1}}\right|-1}{2}=\pair{-\frac{1}{2}}{\frac{1}{2}}{\nu_{r}}-\Delta_{c_{1}}^{n}-\frac{\left|\AA_{c_{1}}\right|}{2}.
\end{align*}
\item \includegraphics[viewport=0bp 37.4046bp 116bp 98bp,scale=0.2]{path\lyxdot 21}
\begin{align*}
\Teu{c_{0}}{c_{1}}{\AA}n & =\square_{r}^{n}+\square_{r}^{c_{0}\cdot c_{1}\cdot n}+\sum_{s\in\Aacces{}{c_{1}}{}\setminus\left\{ r\right\} }\square_{s}^{n}+\square_{s}^{c_{0}\cdot c_{1}\cdot n}\\
 & =-\pair 1{\frac{1}{2}}{\nu_{r}}-\pair 0{\frac{1}{2}}{\nu_{r}+2}+\sum_{s\in\Aacces{}{c_{1}}{}\setminus\left\{ r\right\} }\square_{s}^{d\cdot n}+\square_{s}^{c_{1}\cdot n}\\
 & =-\pair 1{\frac{1}{2}}{\nu_{r}}-\pair 0{\frac{1}{2}}{\nu_{r}+2}+\frac{1}{2}+\Tnu d{c_{1}}{\AA^{r^{+}}}n\\
 & =-\frac{1}{2}+\Tnu d{c_{1}}{\AA^{r^{+}}}n\\
 & =-\frac{1}{2}\pm\frac{1}{2}-\Delta_{c}^{n}-\frac{\left|\AA_{c}\right|-1}{2}=\pm\frac{1}{2}-\Delta_{c}^{n}-\frac{\left|\AA_{c}\right|}{2}.
\end{align*}
\item \includegraphics[viewport=0bp 29.92366bp 113bp 98bp,scale=0.2]{path\lyxdot 22}
\begin{align*}
\Teu{c_{0}}{c_{1}}{\AA}n & =\square_{r}^{n}+\square_{r}^{c_{0}\cdot c_{1}\cdot n}+\sum_{s\in\Aacces{}{c_{1}}{}\setminus\left\{ r\right\} }\square_{s}^{n}+\square_{s}^{c_{0}\cdot c_{1}\cdot n}\\
 & =-\pair 0{\frac{1}{2}}{\nu_{r}}-\pair 0{\frac{1}{2}}{\nu_{r}+2}+\sum_{s\in\Aacces{}{c_{1}}{}\setminus\left\{ r\right\} }\square_{s}^{n}+\square_{s}^{c_{1}\cdot n}\\
 & =-\pair 0{\frac{1}{2}}{\nu_{r}}-\pair 0{\frac{1}{2}}{\nu_{r}+2}+\Tun{c_{1}}{\AA^{r^{+}}}n\\
 & =-\pair 01{\nu_{r}}-\Delta_{c_{1}}^{n}-\frac{\left|\AA_{c_{1}}\right|-1}{2}\\
 & =\pair{\frac{1}{2}}{-\frac{1}{2}}{\nu_{r}}-\Delta_{c_{1}}^{n}-\frac{\left|\AA_{c_{1}}\right|}{2}.
\end{align*}
\item \includegraphics[viewport=0bp 37.4046bp 116bp 98bp,scale=0.2]{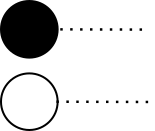}
\begin{align*}
\Teu{c_{0}}{c_{1}}{\AA}n & =\square_{r}^{n}+\square_{r}^{c_{0}\cdot c_{1}\cdot n}+\sum_{s\in\Aacces{}{c_{1}}{}\setminus\left\{ r\right\} }\square_{s}^{n}+\square_{s}^{c_{0}\cdot c_{1}\cdot n}\\
 & =-\pair 0{\frac{1}{2}}{\nu_{r}}-\pair 1{\frac{1}{2}}{\nu_{r}+2}+\sum_{s\in\Aacces{}{c_{1}}{}\setminus\left\{ r\right\} }\square_{s}^{n}+\square_{s}^{c_{1}\cdot d\cdot n}\\
 & =-\pair 0{\frac{1}{2}}{\nu_{r}}-\pair 1{\frac{1}{2}}{\nu_{r}+2}+\frac{1}{2}+\Teu d{c_{1}}{\AA^{r^{+}}}n\\
 & =-\frac{1}{2}\pm\frac{1}{2}-\Delta_{c_{1}}^{n}-\frac{\left|\AA_{c_{1}}\right|-1}{2}=\pm\frac{1}{2}-\Delta_{c_{1}}^{n}-\frac{\left|\AA_{c_{1}}\right|}{2}.
\end{align*}
\end{itemize}
That concludes the proof of Property (\ref{propriete.theta2}) for
the invariant $\Teu{c_{0}}{c_{1}}{\AA}n$. The case of the invariant
$\Tnu{c_{0}}{c_{1}}{\AA}n$ is obtained much the same way. 

To prove inductively property (\ref{Mixed.branch}) as a consequence
of all previous properties, we consider a mixed branch of any type
as in Figure \ref{fig:Mixed-branch-and}. In the sequel, the vertex
is designated by its position $k$ in the branch $k=1,\cdots$. The
$N^{\textup{th}}$ is the first for which the mixing property does
not hold. 

\begin{figure}
\begin{centering}
\includegraphics[width=0.7\textwidth]{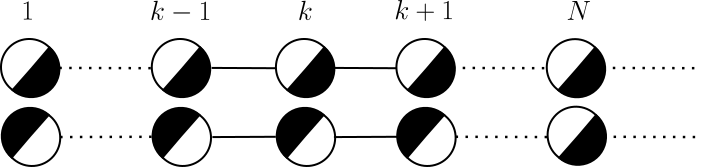}
\par\end{centering}
\caption{\label{fig:Mixed-branch-and}Mixed branch stopping from being mixed
at the $N^{th}$ vertex.}

\end{figure}

Let us denote by $\epsilon_{k}^{\star},\ \star=n,c\cdot n$ the configuration
of the $k^{th}$ vertices of the branch. Since the configuration is
supposed to be admissible, summing the two inequalities associated
to the admissibility conditions of Theorem \ref{Theoreme.Un}, we
get
\[
\epsilon_{k}^{n}+\epsilon_{k}^{c\cdot n}\geq n_{k}+2-\sum_{s\in\voisin k}\Delta_{s}^{\sigma_{k,n}}
\]
where $\sigma_{k,n}=n$ if $\Delta_{k}^{n}=0$, else $\sigma_{k,n}=c\cdot n$.
Therefore, 
\begin{equation}
\sum_{k=1}^{N-1}\epsilon_{k}^{n}+\epsilon_{k}^{c\cdot n}\geq2\left(N-1\right)+\sum_{k=1}^{N-1}n_{k}-\sum_{k=1}^{N-1}\sum_{s\in\voisin k}\Delta_{s}^{\sigma_{k,n}}.\label{ineq:}
\end{equation}
Now, we want to estimate the expression in the left of the above inequality.
Suppose first that $k=2,\cdots,N-2$. Notice that in this situation
$\delta_{k}^{\star}=\Delta_{k-1}^{\star}$, hence
\begin{align*}
\epsilon_{k}^{n}+\epsilon_{k}^{c\cdot n} & =\frac{n_{k}}{2}-\frac{\Delta_{k-1}^{n}}{2}+\underset{\nu_{k}^{n}-\Delta_{k-1}^{n}}{\left\lfloor \begin{array}{c}
\Delta_{k}^{n}\\
\frac{1}{2}
\end{array}\right.}+\sum_{s\in\voisin k}\sum_{u\in\Aacces ks{}\setminus\left\{ k\right\} }\square_{u}^{n}\\
 & +\frac{n_{k}}{2}-\frac{\Delta_{k-1}^{c\cdot n}}{2}+\underset{\nu_{k}^{c\cdot n}-\Delta_{k-1}^{c\cdot n}}{\left\lfloor \begin{array}{c}
\Delta_{k}^{c\cdot n}\\
\frac{1}{2}
\end{array}\right.}+\sum_{s\in\voisin k}\sum_{u\in\Aacces ks{}\setminus\left\{ k\right\} }\square_{u}^{c\cdot n}\\
 & =n_{k}+\frac{1}{2}+\sum_{s\in\voisin k}\sum_{u\in\Aacces ks{}\setminus\left\{ k\right\} }\square_{u}^{n}+\square_{u}^{c\cdot n}
\end{align*}

Let us denote by $k^{-}$ and $k^{+}$ the vertices in $\voisin{_{k}}$such
that $k^{-}\geq\left(k-1\right),\ k^{-}\neq k-1$ and $k^{+}\ge\left(k+1\right)$
for the order $\leq$ on the tree. Notice that $k^{-}$ may not exists
and $k^{+}$ may be equal to $k+1.$

From the previous expressions we obtain, 
\begin{align*}
\epsilon_{k}^{n}+\epsilon_{k}^{c\cdot n} & =n_{k}+\frac{1}{2}+\sum_{s\in\voisin k\setminus\left\{ k^{-},k^{+}\right\} }\sum_{u\in\Aacces ks{}\setminus\left\{ k\right\} }\square_{u}^{n}+\square_{u}^{c\cdot n}\\
 & +\sum_{u\in\Aacces k{k^{-}}{}\setminus\left\{ k\right\} }\square_{u}^{n}+\square_{u}^{c\cdot n}\\
 & +\sum_{u\in\Aacces k{k^{+}}{}\setminus\left\{ k\right\} }\square_{u}^{n}+\square_{u}^{c\cdot n}.
\end{align*}
For $s\in\voisin k\setminus\left\{ k^{-},k^{+}\right\} ,$ we are
lead to 
\begin{align*}
\sum_{u\in\Aacces ks{}\setminus\left\{ k\right\} }\square_{u}^{n}+\square_{u}^{c\cdot n} & =\frac{\left|\Aacces ks{}\setminus\left\{ k\right\} \right|}{2}+\sum_{u\in\Aacces ks{}\setminus\left\{ k\right\} }\square_{u}^{n}+\square_{u}^{s\cdot n}\\
 & =\frac{\left|\Aacces ks{}\setminus\left\{ k\right\} \right|}{2}+\Tun s{\AA^{k}}n.
\end{align*}
Hence, according to Property (\ref{propriete.theta1}), it 
\[
\sum_{u\in\Aacces ks{}\setminus\left\{ k\right\} }\square_{u}^{n}+\square_{u}^{c\cdot n}=-\Delta_{s}^{\sigma_{k,n}}.
\]
In the same way, we find 
\begin{align*}
\sum_{u\in\Aacces k{k^{-}}{}\setminus\left\{ k\right\} }\square_{u}^{n}+\square_{u}^{c\cdot n} & =\frac{1}{2}+\frac{\left|\Aacces k{k^{-}}{}\setminus\left\{ k\right\} \right|}{2}+\sum_{u\in\Aacces k{k^{-}}{}\setminus\left\{ k\right\} }\square_{u}^{a}+\square_{u}^{b}\\
 & =\frac{1}{2}+\frac{\left|\Aacces k{k^{-}}{}\setminus\left\{ k\right\} \right|}{2}+\left\{ \begin{array}{l}
\Teu k{k^{-}}{\AA^{k}}n\textup{ or }\\
\Tnu k{k^{-}}{\AA^{k}}n
\end{array}\right.
\end{align*}
where 
\[
\left\{ a,b\right\} =\left\{ \begin{array}{c}
\left\{ n,k\cdot k^{-}\cdot n\right\} \\
\left\{ k\cdot n,k^{-}\cdot n\right\} 
\end{array}\textup{ if \ensuremath{\left(\Delta_{k-1}^{n},\Delta_{k-1}^{n}\right)}}=\left\{ \begin{array}{c}
\left(0,0\right)\textup{ or }\left(1,1\right)\\
\left(0,1\right)\textup{ or }\left(1,0\right)
\end{array}\right.\right.
\]
Thus, property (\ref{propriete.theta1}) ensures that
\[
\sum_{u\in\Aacces k{k^{-}}{}\setminus\left\{ k\right\} }\square_{u}^{n}+\square_{u}^{c\cdot n}=\frac{1}{2}\pm\frac{1}{2}-\Delta_{k^{-}}^{\sigma_{k,n}}.
\]
In the same way, one can prove that
\[
\sum_{u\in\Aacces k{k^{+}}{}\setminus\left\{ k\right\} }\square_{u}^{n}+\square_{u}^{c\cdot n}=\pm\frac{1}{2}-\Delta_{k^{+}}^{\sigma_{k,n}}.
\]
Finally, for $k\in\left\{ 2,\cdots,N-2\right\} $
\begin{equation}
\epsilon_{k}^{n}+\epsilon_{k}^{c\cdot n}=n_{k}+\frac{1}{2}\pm\frac{1}{2}+\left\{ \begin{array}{cl}
\frac{1}{2}\pm\frac{1}{2} & \textup{ if }k^{-}\textup{ exists}\\
0 & \textup{else}
\end{array}\right.-\sum_{s\in\voisin k}\Delta_{s}^{\sigma_{k,n}}\label{eq:22}
\end{equation}
For $k=1$ the computation is slightly different bu we obtain
\begin{align}
\epsilon_{1}^{n}+\epsilon_{1}^{c\cdot n} & =n_{1}+\frac{1}{2}+\underset{\nu_{1}}{\left\lfloor \begin{array}{c}
\Delta_{1}^{n}\\
\Delta_{1}^{c\cdot n}
\end{array}\right.}\pm\frac{1}{2}-\sum_{s\in\voisin 1}\Delta_{s}^{\sigma_{1,n}}.\label{eq:23}
\end{align}
Finally for $k=N-1,$ we can write 
\begin{align*}
\epsilon_{N-1}^{n}+\epsilon_{N-1}^{c\cdot n} & =\frac{n_{N-1}}{2}-\frac{\Delta_{N-2}^{n}}{2}+\underset{\nu_{N}^{n}-\Delta_{N-2}^{n}}{\left\lfloor \begin{array}{c}
\Delta_{N-1}^{n}\\
\frac{1}{2}
\end{array}\right.}+\frac{n_{N-1}}{2}-\frac{\Delta_{N-2}^{c\cdot n}}{2}+\underset{\nu_{N}^{c\cdot n}-\Delta_{N-2}^{c\cdot n}}{\left\lfloor \begin{array}{c}
\Delta_{N-1}^{c\cdot n}\\
\frac{1}{2}
\end{array}\right.}\\
 & +\sum_{s\in\voisin{N-1}}\sum_{u\in\Aacces{N-1}s{}\setminus\left\{ N-1\right\} }\square_{u}^{n}+\square_{u}^{c\cdot n}\\
 & =n_{N-1}+\frac{1}{2}+\sum_{u\in\Aacces{\left(N-1\right)}{\left(N-1\right)^{+}}{}\setminus\left\{ \left(N-1\right)\right\} }\square_{u}^{n}+\square_{u}^{c\cdot n}\\
 & +\sum_{s\in\voisin{N-1},s\neq\left(N-1\right)^{+}}\sum_{u\in\Aacces{N-1}s{}\setminus\left\{ N-1\right\} }\square_{u}^{n}+\square_{u}^{c\cdot n}\\
\end{align*}

Now, we focus on the last term of the above sum : we have
\begin{align*}
\sum_{u\in\Aacces{N-1}{\left(N-1\right)^{+}}{}\setminus\left\{ N-1\right\} }\square_{u}^{n}+\square_{u}^{c\cdot n} & =\frac{\Delta_{N-1}^{n}}{2}+\frac{\Delta_{N-1}^{c\cdot n}}{2}-\underset{\nu_{N}^{n}-\Delta_{N-1}^{n}}{\left\lfloor \begin{array}{c}
\Delta_{N}^{n}\\
\frac{1}{2}
\end{array}\right.}-\underset{\nu_{N}^{c\cdot n}-\Delta_{N-1}^{c\cdot n}}{\left\lfloor \begin{array}{c}
\Delta_{N}^{c\cdot n}\\
\frac{1}{2}
\end{array}\right.}\\
 & +\frac{\Delta_{N}^{n}}{2}+\frac{\Delta_{N}^{c\cdot n}}{2}\\
 & -\left(\frac{\Delta_{N}^{n}}{2}+\frac{\Delta_{N}^{c\cdot n}}{2}\right)+\sum_{u\in\Aacces{\left(N-1\right)}{\left(N-1\right)^{+}}{}\setminus\left\{ \left(N-1\right)\right\} ,\ u\neq N}\square_{u}^{n}+\square_{u}^{c\cdot n}.
\end{align*}
Since $\Delta_{N}^{n}=\Delta_{N}^{c\cdot n}$, $\nu_{N}^{n}=\nu_{N}^{c\cdot n}-1$
and $\Delta_{N-1}^{n}+\Delta_{N-1}^{c\cdot n}=1$, we are lead to
the expression
\[
\sum_{u\in\Aacces{N-1}{\left(N-1\right)^{+}}{}\setminus\left\{ N-1\right\} }\square_{u}^{n}+\square_{u}^{c\cdot n}=\frac{1}{2}-\underset{\nu_{N}-\Delta_{N-1}^{n}}{\left\lfloor \begin{array}{c}
\Delta_{N}^{n}\\
1-\Delta_{N}^{n}
\end{array}\right.}-\Delta_{\left(N-1\right)^{+}}^{\sigma_{\left(N-1\right),n}}
\]
Finally, we find
\begin{align}
\epsilon_{N-1}^{n}+\epsilon_{N-1}^{c\cdot n} & =n_{N-1}+1-\underset{\nu_{N}-\Delta_{N-1}^{n}}{\left\lfloor \begin{array}{c}
\Delta_{N}^{n}\\
1-\Delta_{N}^{n}
\end{array}\right.}\label{eq:24}\\
 & +\left\{ \begin{array}{cl}
\frac{1}{2}\pm\frac{1}{2} & \textup{ if \ensuremath{\left(N-1\right)^{-}}exists}\\
0 & \textup{else}
\end{array}\right.-\sum_{s\in\voisin{_{N-1}}}\Delta_{s}^{\sigma_{N-1,n}}\nonumber 
\end{align}

Summing the equations (\ref{eq:22}), (\ref{eq:23}) and (\ref{eq:24})
yields 
\[
\sum_{k=1}^{N-1}\epsilon_{k}^{n}+\epsilon_{k}^{c\cdot n}=\underset{\nu_{1}}{\left\lfloor \begin{array}{c}
\Delta_{1}^{n}\\
\Delta_{1}^{c\cdot n}
\end{array}\right.}-\underset{\nu_{N}-\Delta_{N-1}^{n}}{\left\lfloor \begin{array}{c}
\Delta_{N}^{n}\\
1-\Delta_{N}^{n}
\end{array}\right.}+\underbrace{\left(\cdots\right)}_{\textup{ at most }2N-3}+\sum_{k=1}^{N-1}n_{k}-\sum_{k=1}^{N-1}\sum_{s\in\voisin k}\Delta_{s}^{\sigma_{k,n}}.
\]
Combining with the inequality (\ref{ineq:}), we obtain
\[
\underset{\nu_{1}}{\left\lfloor \begin{array}{c}
\Delta_{1}^{n}\\
\Delta_{1}^{c\cdot n}
\end{array}\right.}-\underset{\nu_{N}-\Delta_{N-1}^{n}}{\left\lfloor \begin{array}{c}
\Delta_{N}^{n}\\
1-\Delta_{N}^{n}
\end{array}\right.}\geq1.
\]
This inequality induces all the properties presented in Table \ref{Parit=0000E9 de nu}. 

To prove the statements in Table \ref{pure.branch}, we add one white
component at the end of each pure mixed branches, providing thus standard
mixed branches. Numbering the vertices of these branches $1,\cdots,N,N+1$,
the $N+1^{\textup{th}}$ being the added one and setting $n_{N+1}=0$,
we obtain two mixed branches numbered respectively by $n$ and $\left(N+1\right)\cdot n$
whose dicricities are Saito. Thus, the computations performed for
mixed branches yield
\[
\underset{\nu_{1}}{\left\lfloor \begin{array}{c}
\Delta_{1}^{n}\\
\Delta_{1}^{c\cdot n}
\end{array}\right.}-\underset{0-\Delta_{N}^{n}}{\left\lfloor \begin{array}{c}
1\\
0
\end{array}\right.}\geq1.
\]
Thus if $\Delta_{N}^{n}=0$ the above inequality is impossible ; that
excludes the two last cases of Table (\ref{pure.branch}). If $\Delta_{N}^{n}=1$,
then the inequality reduces to 
\[
\underset{\nu_{1}}{\left\lfloor \begin{array}{c}
\Delta_{1}^{n}\\
\Delta_{1}^{c\cdot n}
\end{array}\right.}\geq1,
\]
which implies the two first cases of the table.\emph{ }Finally, suppose
that the mixed branch is reduced to a single couple of vertices\emph{
}and starts with \includegraphics[scale=0.07]{path\lyxdot 132}. Assume
that $\nu_{r}^{n}$ is even. We can write, 
\[
\epsilon_{r}^{n}=\frac{n_{r}}{2}+\pair 0{\frac{1}{2}}{\nu_{r}^{n}}+\sum_{v\in\voisin c}\square_{v}.
\]
Hence, we deduce that 
\begin{align*}
\epsilon_{r}^{n}+1 & =\frac{n_{r}+1}{2}+\pair 0{\frac{1}{2}}{\nu_{r}^{n}+1}+\sum_{v\in\voisin c}\square_{v}.\\
 & =\frac{n_{r}+1}{2}+\pair 0{\frac{1}{2}}{\nu_{r}^{r\cdot n}}+\sum_{v\in\voisin c}\square_{v}.
\end{align*}
Since, $\epsilon^{n}$ is the configuration of the Saito dicriticity
for the numbering $n$, we get 
\[
\epsilon_{r}^{n}\geq2-\sum_{s\in\voisin r}\Delta_{s}^{n}
\]
and consequently, 
\[
\epsilon_{r}^{n}+1\geq2-\sum_{s\in\voisin r}\Delta_{s}^{n}.
\]
Therefore, the dicriticity $\Delta^{n}$ keeps on being Saito for
the tree $\AA$ numbered by $r\cdot n.$ Since, this dicriticity is
unique, we get $\Delta_{r}^{r\cdot n}=1$ which contradicts the hypothesis
of a mixed branch. Finally, $\nu_{r}^{n}$ has to be odd. In the same
way, suppose the mixed branch is reduced to \includegraphics[scale=0.07]{path\lyxdot 133}
and $\nu_{r}^{n}$ is odd. The arguments are the same as above and
from the following computations
\begin{align*}
\epsilon_{r}^{n} & =\frac{n_{r}}{2}+\pair 1{\frac{1}{2}}{\nu_{r}^{n}}+\sum_{v\in\voisin c}\square_{v}\geq n_{r}\\
\epsilon_{r}^{n}+1 & =\frac{n_{r}+1}{2}+\pair 1{\frac{1}{2}}{\nu_{r}^{r\cdot n}}+\sum_{v\in\voisin c}\square_{v}\geq n_{r}+1
\end{align*}
we get a contradiction. Hence, $\nu_{r}^{n}$ is even. This concludes
the proof of property (\ref{Mixed.branch}).

It remains to prove property (\ref{component.dicritique}). Let $\mathbb{K}$
be the connected component of $r$ in the sub-graph $\mathbb{A}\setminus\left\{ \left.c\in\AA\right|\Delta_{c}^{n}=0\right\} $.
If $\mathbb{K}=\emptyset,$ the property is proved by induction on
$\left|\AA\right|$. If not, suppose that there exists $s\in\mathbb{K}$
such that $n_{s}>0.$ Then, since $\Delta_{s}^{n}=1,$ the admissibility
condition ensures that $\epsilon_{s}^{n}\geq n_{s}>0$ which is the
property. Finally, we suppose that for any $s\in\mathbb{K},$ $n_{s}=0$.
Assume also that for any $s\in\mathbb{K}$, $\epsilon_{s}^{n}=0.$
For any $s\in\voisin r$, we consider $k_{s}\in\Aacces{}s{}$ the
minimal vertex such that $\Aacces{k_{s}}s{}$ is in $\mathbb{K}.$
Now, one can write 
\begin{align}
0=\epsilon_{r}^{n} & =-\square_{r}+\sum_{v\in\voisin r}\sum_{s\in\Aacces{}v{}\setminus\left\{ r\right\} }\square_{s}^{n}.\label{eq:compliquee}\\
 & =\pair 1{\frac{1}{2}}{\nu_{r}^{n}}+\sum_{v\in\voisin r}\sum_{\Aacces{}{k_{v}^{-1}}{}\setminus\left\{ r\right\} }\square_{s}^{n}\nonumber \\
 & +\sum_{v\in\voisin r}\sum_{\Aacces{k_{v}}v{}\setminus\left\{ r\right\} }\square_{s}^{n}\nonumber 
\end{align}

Now, extracting the intermediary sum in the expression above yields
\begin{align*}
\sum_{\Aacces{}{k_{v}^{-1}}{}\setminus\left\{ r\right\} }\square_{s}^{n} & =\sum_{\Aacces{}{k_{v}^{-1}}{}\setminus\left\{ r\right\} }\frac{\delta_{s}^{n}}{2}-\pair{\Delta_{s}^{n}}{\frac{1}{2}}{\star}\\
 & =\sum_{\Aacces{}{k_{v}^{-1}}{}\setminus\left\{ r\right\} }\frac{\delta_{s}^{n}}{2}-\sum_{\Aacces{}{k_{v}^{-1}}{}\setminus\left\{ r\right\} }\pair{\Delta_{s}^{n}}{\frac{1}{2}}{\star}\\
 & =\frac{\delta_{r^{+}}^{n}}{2}+\sum_{\Aacces{}{k_{v}^{-1}}{}\setminus\left\{ r,r^{+}\right\} }\frac{\delta_{s}^{n}}{2}-\sum_{\Aacces{}{k_{v}^{-1}}{}\setminus\left\{ r\right\} }\pair{\Delta_{s}^{n}}{\frac{1}{2}}{\star}\\
 & =\frac{\delta_{r^{+}}^{n}}{2}+\sum_{\Aacces{}{k_{v}^{-2}}{}\setminus\left\{ r\right\} }\frac{\delta_{s^{+}}^{n}}{2}-\left(\sum_{\Aacces{}{k_{v}^{-2}}{}\setminus\left\{ r\right\} }\pair{\Delta_{s}^{n}}{\frac{1}{2}}{\star}\right)-\pair{\Delta_{k_{v}^{-1}}^{n}}{\frac{1}{2}}{\star}.
\end{align*}
where $r^{+}$ is the successor of $r$ in $\Aacces{}v{}$, $k_{v}^{-i}$
the predecessor of $k_{v}^{-i+1}.$ Since $\Delta_{r}^{n}=1,$ $\delta_{r^{+}}^{n}=1$
and $\delta_{s^{+}}^{n}=1+\Delta_{s}^{n}$. Hence, we obtain
\begin{equation}
\sum_{\Aacces r{k_{v}^{-1}}{}\setminus\left\{ r\right\} }\square_{s}^{n}=\underbrace{\frac{1}{2}-\pair{\Delta_{k_{v}^{-1}}^{n}}{\frac{1}{2}}{\star}}_{A}+\underbrace{\sum_{\Aacces r{k_{v}^{-2}}{}\setminus\left\{ r\right\} }\pair{\frac{1-\Delta_{s}^{n}}{2}}{\frac{\Delta_{s}^{n}}{2}}{\star}}_{\geq0}.\label{Equation.pour.composante}
\end{equation}
Now, since $k_{v}\in\mathbb{K}$, then 
\[
0=\epsilon_{k_{v}}^{n}=-\square_{k_{v}}^{n}+\sum_{v\in\voisin{k_{v}}}\sum_{s\in\Aacces{k_{v}}v{}\setminus\left\{ k_{v}\right\} }\square_{s}^{n}.
\]
Therefore 
\[
\square_{k_{v}}^{n}=\sum_{v\in\voisin{k_{v}}}\sum_{s\in\Aacces{k_{v}}v{}\setminus\left\{ k_{v}\right\} }\square_{s}^{n}.
\]

If $\Delta_{k_{v}^{-1}}^{n}=0$ then, in expression (\ref{Equation.pour.composante})
$A=\frac{1}{2}-\pair{\Delta_{k_{v}^{-1}}^{n}}{\frac{1}{2}}{\star}\geq0.$
If $\Delta_{k_{v}^{-1}}^{n}=1$, then by construction of $k_{v}$,
$k_{v}^{-1}\notin\voisin{k_{v}}$. In the latter case, there exists
$s\in\voisin{k_{v}}$ and $c\in\Aacces{k_{v}}s{}$ with $\parent c=\left\{ k_{v},k_{v}^{-1}\right\} .$
Thus 
\[
\delta_{c}=\frac{1}{2}+\frac{1}{2}
\]
which comes to compensate the fact that in relation (\ref{Equation.pour.composante}),
$A$ might be equal to $\frac{-1}{2}.$ Finally, if $s\in\Aacces{k_{v}}v{}\setminus\left\{ r\right\} $,
$s\neq k_{v}$ then as before, 
\[
\square_{s}^{n}=\sum_{v\in\voisin s}\sum_{u\in\Aacces sv{}\setminus\left\{ s\right\} }\square_{u}^{n}.
\]
Doing so step by step, from (\ref{eq:compliquee}), we are lead to
an expression of the form 
\[
0=\pair 1{\frac{1}{2}}{\nu_{r}^{n}}+\underbrace{\left(\cdots\right)}_{\geq0}
\]
which is impossible. That concludes the proof of property (\ref{component.dicritique})
and, at the same time, the proof of Theorem \ref{Theoreme.Un}.
\end{proof}

\section{Saito foliations of a germ of curve and its deformation.}

The $\mathcal{O}_{\left(\mathbb{C}^{2},0\right)}-$module $\textup{Der}\left(\log C\right)$
of germs of vector fields tangent to a germ of curve $C\subset\left(\mathbb{C}^{2},0\right)$
has been introduced as a particular case of a far more general object
by K. Saito in \cite{MR586450}. From now on, we are interesting in
the valuations of the vector fields in this module, for the standard
valuation $\nu$ defined by 
\[
\nu\left(a\partial_{x}+b\partial_{y}\right)=\min\left(\nu\left(a\right),\nu\left(b\right)\right),\ a,b\in\mathbb{C}\left\{ x,y\right\} ,
\]
where 
\[
\nu\left(\sum_{i,j}a_{ij}x^{i}y^{j}\right)=\min_{a_{ij}\neq0}\left\{ i+j\right\} .
\]

In particular, we define \emph{the number of Saito of $C$ }by 
\[
\mathfrak{s}_{C}=\min_{X\in\textup{Der}\left(\log C\right)}\nu\left(X\right)
\]
A vector field tangent to $C$ is said \emph{optimal }if its valuation
is equal to the Saito number of $C.$ 

Let $\pi$ be the blowing-up of the singular point of $C.$ At any
singular point $s$ of the total transform $\pi^{-1}\left(C\right)$,
the strict transform $X^{\pi}$ of $X$ leaves invariant the strict
transfotm $C^{\pi}$ and maybe the exceptional divisor of $\pi$.
When the latter occurs, $X$ is said \emph{non dicritical}. Otherwise,
it is said \emph{dicritical}. The vector field $X^{\pi}$ may not
be optimal for the germ of $C^{\pi}$ at $s$ although $X$ is optimal
for $C.$ When the optimality property is preserved along the desingularization
process of $C$, we said that $X$ is \emph{Saito} for $C.$ More
precisely, we consider the following inductive definition :
\begin{defn}
$X$ is \emph{Saito} for $C$ when $X$ is optimal for $C$ and when
$X^{\pi}$ is Saito for each germ of $D\cup C^{\pi}$ at any of its
singular points where 
\begin{itemize}
\item $D=\pi^{-1}\left(0\right)$ if $X$ is not dicritical, 
\item $D=\emptyset$, otherwise. 
\end{itemize}
\end{defn}

To initiate the definition, we require that if $C$ is regular, then
$\nu\left(X\right)=0$ and if $C$ is the union of two transversal
regular curves, then $\nu\left(X\right)=1.$

The goal of the current section is to prove the existence of a curve
$C^{\prime}$ topologically equivalent to $C$ that admits a Saito
foliation. To do so, we are going to construct a foliation using gluing
techniques described in \cite{alcides}. The elementary pieces of
this gluing are semi-local models for Saito foliations introduced
just below. The result of the first section will provide a global
data prescribing the gluing. The obtained foliation will be studied
from the point of view of deformations and the curve $C^{\prime}$
will be found as an invariant curve of a generic deformation of the
constructed foliation. 

\subsection{Semi-local models for Saito foliations.}

First, let us describe the two families of semi-local models for Saito
foliations. These models are said to be semi-local because they are
defined in the neighborhood of a compact divisor embedeed in a surface. 

Let $\mathcal{M}_{p}$ be the germ of neighborhood of the divisor,
given locally by $x_{1}=0$ and $y_{2}=0$, in the dimension $2$
manifold defined by the disjont union of two charts

\[
\left(\mathbb{C}^{2},\left(x_{1},y_{1}\right)\right)\coprod\left(\mathbb{C}^{2},\left(x_{2},y_{2}\right)\right)
\]
with the identification $y_{2}=y_{1}^{p}x_{1}\quad x_{2}=\frac{1}{y_{1}},\ p\geq0.$
The divisor $\left\{ x_{1}=y_{2}=0\right\} $ is a regular rational
compact curve embedded in $\mathcal{M}_{p}$ with negative self-intersection
equal to $-p$. 

\subsubsection{The dicritical model.}

The manifold $\mathcal{M}_{p}$ can be foliated by the foliation $\mathcal{R}_{p,N}$
given in coordinates $\left(x_{1},y_{1}\right)$ by the $1-$form
\begin{equation}
\textup{d}x_{1}+\prod_{i=1}^{N}\left(y_{1}-i\right)\textup{d}y_{1}.\label{eq:dic}
\end{equation}
This foliation is transverse to the compact divisor except at the
points given in coordinates $\left(x_{1},y_{1}\right)$ by $\left(0,i\right),$
$i=1,\ldots,N$ where it is tangent at order $1.$ Using the changes
of coordinates form $\left(x_{1},y_{1}\right)$ to $\left(x_{2},y_{2}\right)$,
we can see that the foliation is regular and transverse to the compact
divisor at $+\infty.$ Note that we have
\[
\sum_{p\in\left\{ x_{1}=0\right\} }\textup{Tan}\left(\mathcal{R}_{p,N},\left\{ x_{1}=0\right\} ,p\right)=N
\]
where $\textup{Tan}$ is an index introduced in particular in \cite{hertlingfor}.

\subsubsection{The non dicritical model.}

The manifold $\mathcal{M}_{p}$ can also be foliated by a foliation
given by the $1-$form $\mathcal{G}_{p,N,\Lambda}$, $\Lambda=\left(\lambda_{1},\ldots,\lambda_{N}\right)$
written in the coordinates $\left(x_{1},y_{1}\right)$ 
\begin{equation}
\frac{dx_{1}}{x_{1}}+\sum_{i=1}^{N}\lambda_{i}\frac{\textup{d}y_{1}}{y_{1}-i}.\label{eq:nondic}
\end{equation}
with the following condition, known as the Camacho-Sad relation \cite{separatrice},
\begin{equation}
\sum_{i=1}^{N}\lambda_{i}=p.\label{eq:camacho}
\end{equation}

This foliation leaves invariant the divisor $x_{1}=0$ and the relation
above ensure that it is regular at $+\infty.$ By construction, for
any $i,$ we get 
\[
\textup{CS}\left(\mathcal{G},\left\{ x_{1}=0\right\} ,i\right)=\lambda_{i}.
\]

where $\textup{CS}$ is the so-called Camacho-Sad index \cite{separatrice}.
Moreover, it follows that 
\[
\sum_{p\in\left\{ x_{1}=0\right\} }\textup{Ind}\left(\mathcal{R}_{p,N},\left\{ x_{1}=0\right\} ,p\right)=N
\]
where $\textup{Ind}$ is the second index introduced in \cite{hertlingfor}. 

Figure \ref{fig:Local-model-of} presents the topology of $\mathcal{R}$
and $\mathcal{G}$. 
\begin{figure}
\begin{centering}
\includegraphics[width=4.5cm]{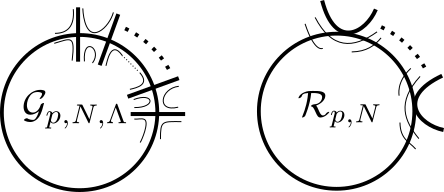}
\par\end{centering}
\caption{\label{fig:Local-model-of}Local models for Saito foliations.}
\end{figure}

\subsection{Gluing local models. }

Let $E$ be the process of desingularization of $C.$ Let $\AA$ be
the dual tree of the exceptional divisor $E^{-1}\left(0\right).$
The map $E$ is a composition of elementary blowing-ups that we denote
\[
E=\bigcirc_{s\in\AA}E_{s}.
\]
Here $E_{s}$ is the elementary blowing-up whose exceptional divisor
is the component $s$. For any $c$, the notation $\star_{c}^{E}$
will refer to the germ at the point leading to the component $c$
of the strict transform of $\star$ by the sub-process $\bigcirc_{s\in\AA_{c}\setminus\left\{ c\right\} }E_{s}$
where $\AA_{c}$ is the access tree from $r$ to $c$, as defined
in the previous section. 

For a germ of vector field $X$ (or its associated germ of foliation
$\mathcal{F}$) and $s\in\AA$, we will set $\Delta_{s}^{X\ \left(\textup{or }\mathcal{F}\right)}=1$
if $X_{s}^{E}$ is non dicritical, otherwise, $\Delta_{s}^{X}=0.$
It defines a dicriticity on $\AA.$
\begin{prop}
\label{prop:presque.optimal}There exists $C^{\prime}$ topologically
equivalent to $C$ such that there exists $X\in\textup{Der}\left(\textup{log}C^{\prime}\right)$
satisfying the following : for any $s\in\AA$ 
\[
\nu\left(X_{s}^{E}\right)=\frac{\nu\left(C_{s}^{E}\right)+\delta_{s}^{X}}{2}-\pair{1-\Delta_{s}^{X}}{\frac{1}{2}}{\nu\left(C_{s}^{E}\right)+\delta_{s}^{X}}
\]
\end{prop}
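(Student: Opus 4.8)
The plan is to construct the curve $C'$ and the vector field $X$ by gluing together the semi-local models $\mathcal{R}_{p,N}$ and $\mathcal{G}_{p,N,\Lambda}$ along the dual tree $\AA$, using the Saito dicriticity $\Delta^{n}$ produced by Theorem \ref{Theoreme.Un} as the combinatorial datum that prescribes, at each vertex, whether to insert a dicritical or a non-dicritical piece. First I would fix the numbering $n=\left(n_{c}\right)_{c\in\AA}$ of the dual tree $\AA$ coming from the topological type of $C$: the natural choice is $n_{c}=\nu\left(C_{c}^{E}\right)$, the multiplicity of the strict transform of the curve appearing at the component $c$ (equivalently, the number of local branches through the point that is blown up to create $c$). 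With this numbering, Theorem \ref{Theoreme.Un}.\ref{Existence.Unicite.Saito} furnishes a unique admissible Saito dicriticity $\Delta^{n}$, and I would declare a component $s$ to carry the dicritical model $\mathcal{R}$ exactly when $\Delta_{s}^{n}=0$ and the non-dicritical model $\mathcal{G}$ when $\Delta_{s}^{n}=1$. The number $N$ of tangency/invariant points on each rational component is dictated by the numbering and by the contributions of the neighbouring branches, i.e. by $\delta_{s}$ together with $n_{s}$.

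Next I would carry out the gluing itself following the techniques of \cite{alcides}. The key point is that along each component $s$, the local model must be compatible at the singular points it shares with its parents and children: in the non-dicritical case the Camacho--Sad relation \eqref{eq:camacho}, $\sum_{i}\lambda_{i}=p$, must be solvable for the self-intersection $-p$ of the component $s$ in $\AA$, and the chosen indices $\lambda_{i}$ must match the indices prescribed by the adjacent models. Because the admissibility conditions (a) and (b) of Theorem \ref{Theoreme.Un} are precisely inequalities of the form $\epsilon_{c}^{n}\geq 2-\sum_{c'\in\voisin c}\Delta_{c'}^{n}$ and $\epsilon_{c}^{n}\geq n_{c}$, I expect these to translate exactly into the feasibility of the index assignments at each vertex, so that the gluing can always be completed. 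The configuration $\epsilon^{n}$ of Definition \ref{def1}.\ref{configuration} should be read as the residual dimension of the space of admissible local indices, and property \ref{component.dicritique} guarantees that each dicritical block has at least one component with genuine freedom, preventing the gluing from being overdetermined on any invariant region.

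Once the semi-local foliation is assembled, I would compute the valuation $\nu\left(X_{s}^{E}\right)$ of the resulting strict transform at each component directly from the local normal forms \eqref{eq:dic} and \eqref{eq:nondic}. The square index $\square_{s}^{n}=\frac{\delta_{s}}{2}-\pair{\Delta_{s}}{\frac{1}{2}}{\nu_{s}^{n}-\delta_{s}}$ is engineered so that, after the matrix inversion $\left(\nu_{c}\right)=\mathbb{P}^{-1}\left(n_{c}\right)$ relating the numbering to the valuations of the total transform, the valuation of $X$ at each component comes out to be exactly
\[
\nu\left(X_{s}^{E}\right)=\frac{\nu\left(C_{s}^{E}\right)+\delta_{s}^{X}}{2}-\pair{1-\Delta_{s}^{X}}{\frac{1}{2}}{\nu\left(C_{s}^{E}\right)+\delta_{s}^{X}},
\]
which is the claimed formula once one identifies $n_{s}=\nu\left(C_{s}^{E}\right)$ and matches the parity convention $\pair\cdot\cdot\cdot$. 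Here the slight shift from $\Delta_{s}$ to $1-\Delta_{s}^{X}$ is simply the bookkeeping that a non-dicritical field ($\Delta_{s}^{X}=1$) leaves the divisor invariant and so loses half a unit of valuation compared to the dicritical case; the floor/ceiling is what records the resulting parity correction.

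The hard part will be verifying that the local gluing is globally consistent, i.e. that the prescribed tangency and Camacho--Sad indices can be realized simultaneously at every singular point of the exceptional divisor without contradiction. This is exactly where the full force of Theorem \ref{Theoreme.Un} is needed: the admissibility conditions are not merely sufficient but are the precise numerical obstructions to the existence of a compatible family of indices, and the uniqueness of the Saito dicriticity is what makes the construction canonical. I would therefore organize the argument as an induction along the desingularization process $E=\bigcirc_{s\in\AA}E_{s}$, adding components one blowing-up at a time, and at each step invoke the inductive structure already exploited in the proof of Theorem \ref{Theoreme.Un} (splitting $\AA$ at the root into the subtrees $\AA^{c}$, $c\in\voisin r$) to propagate both the existence of the local model and the valuation formula. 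The delicate bookkeeping of $\delta_{s}$ and of the parity of the $\nu_{s}$, controlled by Tables \ref{Parit=0000E9 de nu} and \ref{pure.branch}, is what ensures the floor terms in the formula assemble correctly across the whole tree.
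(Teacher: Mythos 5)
Your overall architecture does match the paper's: take the Saito dicriticity of Theorem \ref{Theoreme.Un}, glue the semi-local models $\mathcal{R}$ and $\mathcal{G}$ along $\AA$ using \cite{alcides}, use admissibility and property (\ref{component.dicritique}) to make the Camacho--Sad relations solvable, and read off the valuations. But there is a concrete error at the very first step, and it propagates through everything: the numbering. The paper numbers the tree by $n_{s}=$ the number of branches of the strict transform $C^{E}$ attached to the component $s$; the quantity $\nu\left(C_{s}^{E}\right)$ is then \emph{derived} as the valuation $\nu_{s}^{n}=\left(\mathbb{P}^{-1}n\right)_{s}$. Your choice $n_{c}=\nu\left(C_{c}^{E}\right)$ is a different datum: for the cusp $y^{2}=x^{3}$ the correct numbering is $\left(0,0,1\right)$, whereas the multiplicities of the successive strict transforms are $\left(2,1,1\right)$; and your parenthetical ``equivalently, the number of local branches through the point blown up'' is a third, again different, quantity (multiplicity equals branch count only when every branch is smooth, and a branch passes through many infinitely near points but is attached to only one component). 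With your numbering the Saito dicriticity and the configuration $\epsilon^{n}$ are computed from the wrong input, the identity $\epsilon=\mathbb{P}\left(\frac{1}{2}\nu^{n}-\square\right)$ no longer connects to $\nu\left(C_{s}^{E}\right)$, and -- equally fatal -- the curve $C^{\prime}$ obtained by attaching $n_{s}$ branches to each component would not be topologically equivalent to $C$: Zariski's criterion \cite{zariskitop} requires matching the branch-count numbering, not the multiplicity numbering.

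The second genuine gap is the valuation computation. Saying the square index is ``engineered so that the valuation comes out'' to the claimed formula assumes exactly what must be proved. The paper needs a real theorem here, namely the index formula of \cite{hertlingfor}, $\nu\left(\mathcal{F}\right)+1=\sum_{s}\rho_{s}\times\left(\cdots\right)$, to convert the upstairs data (the sums of $\textup{Ind}$ or $\textup{Tan}$ indices along each component, which by construction of the models equal $\epsilon_{s}^{n}$ plus corner terms) into the downstairs valuation; this is precisely where the multiplicities $\rho_{s}$ enter and where the definition of the configuration yields the stated formula, first at the root and then, repeating the argument at every stage of the process, at every $s\in\AA$. Two further steps are missing from your sketch: Grauert's contraction theorem \cite{grauerthans}, without which the glued foliation lives only on an abstract neighborhood of a compact divisor rather than on a blow-up of $\left(\mathbb{C}^{2},0\right)$, so there is no germ $X$ at the origin at all; and the extraction of $C^{\prime}$ itself, where at non-dicritical components one needs admissibility (b), $\epsilon_{s}^{n}\geq n_{s}$, to guarantee at least $n_{s}$ invariant branches to attach (your proposal uses admissibility only for the gluing feasibility). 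Finally, a terminology slip worth fixing: property (\ref{component.dicritique}) concerns the connected components of the \emph{invariant} (non-dicritical, $\Delta_{s}^{n}=1$) part of the tree, where the Camacho--Sad and corner equations live, not the ``dicritical blocks.''
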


\begin{proof}
The process $E$ of desingularization of $C$ induces an numbered
ordered tree $\AA$ as defined in the previous section. The tree $\AA$
is the dual tree of the exceptional divisor of $E$ ; the order is
the one induced by the process it-self ; the numbering $n$ is setting
as follows : $n_{s}$ is equal to the number of component of the strict
transform $C^{E}$ attached to $s.$ Consider the associated Saito
dicriticity $\Delta^{n}$ and configuration $\epsilon^{n}$ given
by Theorem \ref{Theoreme.Un}. 

Using a result of A.-L. Neto \cite{alcides} of construction of singular
foliations in dimension $2$ from elementary elements, we are going
to construct a foliation from the data of $\Delta^{n}$ and $\varepsilon^{n}$
by gluing semi-local models. The matrix $\mathbb{P}$ being the proximity
matrix of $\AA$ it is known that $\mathbb{P}^{t}\mathbb{P}$ is the
intersection matrix $I$ of $E^{-1}\left(0\right)$ embedded in its
neighborhood \cite{MR2107253}. 

To $s\in\AA$ with $\Delta_{s}^{n}=1$, we associate the semi-local
model $\mathcal{G}_{I_{s,s},\epsilon_{s}^{n}+\left|\left\{ c\in\voisin s,\Delta_{c}^{n}=1\right\} \right|,\Lambda_{s}}$
where 
\[
\Lambda_{s}=\left(\lambda_{1},\cdots,\lambda_{\epsilon_{s}^{n}},\lambda_{s,c_{1}},\cdots,\lambda_{s,c_{\left|\left\{ c\in\voisin s,\Delta_{c}^{n}=1\right\} \right|}}\right)/
\]
The only obstruction for such a semi-local construction is the Camacho-Sad
relation 
\begin{equation}
\sum_{i=1}^{\epsilon_{s}^{n}}\lambda_{i}+\sum_{i\in\left\{ c\in\voisin s,\Delta_{c}^{n}=1\right\} }\lambda_{s,i}=-I_{s,s}\label{eq:Camacho}
\end{equation}
To $s\in\AA$ with $\Delta_{s}^{n}=0,$ we associate the semi-local
model 
\[
\mathcal{R}_{I_{s,s},\epsilon_{s}^{n}-2+\sum_{c\in\mathfrak{v_{s}}}\Delta_{c}^{n}}.
\]
 Since $\Delta^{n}$ is the Saito dicriticity, the admissibility condition
yields the inequality
\[
\epsilon_{s}^{n}-2+\sum_{c\in\mathfrak{v_{s}}}\Delta_{c}^{n}\geq0,
\]
so that, the definition of the model does make sense. 

From \cite{alcides}, all these semi-local models can be glued together
by gluing maps following the edges of $\mathbb{A}$ provided that
at any intersection point of two components $s$ and $s^{\prime}$
with $\Delta_{s}^{n}=\Delta_{s^{\prime}}^{n}=1$, the following relation
is satisfied
\begin{equation}
\lambda_{s,s^{\prime}}\cdot\lambda_{s^{\prime},s}=1.\label{eq:CS-1}
\end{equation}
Property (\ref{component.dicritique}) of Theorem \ref{Theoreme.Un}
ensures that, along any connected component $\mathbb{K}$ of $\mathbb{A}\setminus\left\{ \left.s\in\mathbb{A}\right|\Delta_{s}^{n}=1\right\} $,
no incompatiblity will occur between the relations (\ref{eq:Camacho})
and (\ref{eq:CS-1}). Indeed, along $\mathbb{K}$ the number of induced
relations is 
\[
\sharp\textup{vertices}\left(\mathbb{K}\right)+\sharp\textup{edges}\left(\mathbb{K}\right).
\]
However, the number of variables involved in the mentioned relations
is 
\[
\sum_{s\in\mathbb{K}}\epsilon_{s}^{n}+\left|\left\{ c\in\voisin s,\Delta_{c}^{n}=1\right\} \right|.
\]
Following Property (\ref{component.dicritique}) the above number
of variables satisfies
\begin{align*}
\sum_{s\in\mathbb{K}}\epsilon_{s}^{n}+\left|\left\{ c\in\voisin s,\Delta_{c}^{n}=1\right\} \right| & \geq1+\sum_{s\in\mathbb{K}}\left|\left\{ c\in\voisin s,\Delta_{c}^{n}=1\right\} \right|=2\sharp\textup{vertices}\left(\mathbb{K}\right)-1\\
 & \geq\sharp\textup{vertices}\left(\mathbb{K}\right)+\sharp\textup{edges}\left(\mathbb{K}\right)
\end{align*}
since $\sharp\textup{vertices}\left(\mathbb{K}\right)=\sharp\textup{edges}\left(\mathbb{K}\right)+1.$
Therefore the system of equations, union of (\ref{eq:Camacho}) and
(\ref{eq:CS-1}), has always a solution. Note that these solutions
can be chosen to be rational numbers. 

The gluing leads to a foliation defined in a neighborhood of a compact
divisor $\mathcal{D}$, union of $\left|\AA\right|$ regular rational
curves, with same intersection matrix as the one of the exceptional
divisor of $E.$ According to a classical result of H. Grauert \cite{grauerthans},
the neighborhood of $\mathcal{D}$ is analytically equivalent to the
neighborhood of the exceptional divisor of some blowing-up process
$E^{\prime}$ with same dual graph as $E.$ The latter neighborhood
is foliated by a foliation $\mathcal{F}^{\prime}$ that can be contracted
by $E^{\prime}$ in a foliation $\mathcal{F}.$ 

For any component $s\in\AA$, either $\Delta_{s}^{n}=0$ and $\mathcal{F}^{\prime}$
is generically transverse to $s$. Then, we choose arbitraly $n_{s}$
regular and transverse invariant curves attached to s. Or $\Delta_{s}^{n}=1$
and $\mathcal{F}^{\prime}$ locally given by (\ref{eq:nondic}) leaves
invariant at least $n_{s}$ regular and transverse curves attached
to $s$: indeed, $\Delta^{n}$ being Saito, we have $\epsilon_{s}^{n}\geq n_{s}.$
The union of all these curves yields a curve $C^{\prime}$ whose desingularization
process has for numbered dual graph the tree $\AA$. Since the number
of component of the strict transform by $E$ of $C^{\prime}$ has
exactly $n_{s}$ components attached to $s$, $C^{\prime}$ and $C$
are topologically equivalent \cite{zariskitop}. In the sequel, for
the sake of simplicity, we denote $C^{\prime}$ simple by $C$. According
to \cite[Theorem 3]{hertlingfor}, we have
\[
\nu\left(\mathcal{F}\right)+1=\sum_{s\in\AA}\rho_{s}\times\left\{ \begin{array}{cl}
-\left|\left\{ c\in\voisin s,\Delta_{c}^{n}=1\right\} \right|+\sum_{p\in s}\textup{Ind}\left(\mathcal{F}^{E},s,p\right) & \textup{ if }\Delta_{s}^{n}=1\\
2-\left|\left\{ c\in\voisin s,\Delta_{c}^{n}=1\right\} \right|+\sum_{p\in s}\textup{Tan}\left(\mathcal{F}^{E},s,p\right) & \textup{ if }\Delta_{s}^{n}=0
\end{array}\right.
\]
 In our construction, the definition of the semi-local models induces
the relations 
\begin{align*}
\sum_{p\in s}\textup{Ind}\left(\mathcal{F}^{E},s,p\right) & =\epsilon_{s}^{n}+\left|\left\{ c\in\voisin s,\Delta_{c}^{n}=1\right\} \right|\\
\sum_{p\in s}\textup{Tan}\left(\mathcal{F}^{E},s,p\right) & =\epsilon_{s}^{n}-2+\left|\left\{ c\in\voisin s,\Delta_{c}^{n}=1\right\} \right|.
\end{align*}
Moreover, by construction for any $s\in\AA$, we find 
\[
\Delta_{s}^{n}=\Delta_{s}^{\mathcal{F}},\ \delta_{s}^{n}=\delta_{s}^{\mathcal{F}}.
\]
Thus, since the configuration $\epsilon^{n}$ satisfies the system
(\ref{configuration}) of Theorem \ref{Theoreme.Un}, the valuation
of $\mathcal{F}$ can be expressed as follows 
\[
\nu\left(\mathcal{F}\right)=\sum_{s\in\AA}\rho_{s}\epsilon_{s}^{n}=\frac{\nu_{1}^{n}}{2}-\pair{1-\Delta_{1}^{n}}{\frac{1}{2}}{\nu_{1}^{n}}=\frac{\nu\left(C\right)}{2}-\pair{1-\Delta_{1}^{\mathcal{F}}}{\frac{1}{2}}{\nu\left(C\right)}.
\]
Doing the same remark along the whole process of blowing-ups of $C$,
we obtain, for any $s\in\AA$, 
\begin{align}
\nu\left(\mathcal{F}_{s}^{E}\right) & =\frac{\nu\left(C_{s}^{E}\right)+\delta_{s}^{\mathcal{F}}}{2}-\pair{1-\Delta_{s}^{\mathcal{F}}}{\frac{1}{2}}{\nu\left(C_{s}^{E}\right)+\delta_{s}^{\mathcal{F}}}\label{eq:valuation.saito}
\end{align}
\end{proof}

\subsection{Deformation of $\mathcal{F}$. }

In the previous section, we obtained a foliation $\mathcal{F}$ leaving
invariant a curve $C$ whose valuations satisfy the relations described
in Proposition \ref{prop:presque.optimal}. However, we cannot still
claim that a vector field $X$ defining $\mathcal{F}$ is neither
optimal nor Saito, since the curve $C$ could be \emph{special }in
its moduli space and admit a tangent vector field with small valuations.
In order to overcome this difficulty, we are going to prove that $\mathcal{F}$
can be put in a deformation \emph{weakly equisingular} that follows
a deformation of toward generic elements of $\textup{Mod}\left(C\right)$,
for which lower bound for Saito numbers is known. To implement this
strategy, we will gather material from \cite{MR2422017,YoyoBMS,genzmer2020saito,Genzmer1,Gomez}.
\begin{thm}
There exists $C^{\prime}$ in the moduli space of $C$ such that $C^{\prime}$admits
a Saito vector field $X$ further satisfying for any $s\in\AA$
\[
\nu\left(X_{s}^{E}\right)=\frac{\nu\left(C_{s}^{E}\right)+\delta_{s}^{X}}{2}-\pair{1-\Delta_{s}^{X}}{\frac{1}{2}}{\nu\left(C_{s}^{E}\right)+\delta_{s}^{X}}
\]
\end{thm}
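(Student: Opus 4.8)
The plan is to promote the foliation of Proposition \ref{prop:presque.optimal} to a genuine Saito vector field by deforming it toward a generic member of the moduli space, where a matching lower bound for the Saito number becomes available. Write $\mathcal{F}$, $X$ and $C$ for the foliation, the tangent vector field and the invariant curve produced by Proposition \ref{prop:presque.optimal} (the curve renamed from $C'$). Evaluating the valuation relation (\ref{eq:valuation.saito}) at the root $r$, where $\delta_{r}^{X}=0$, yields $\nu(X)=\left\lfloor \nu(C)/2\right\rfloor$ or $\nu(C)/2-1$ according as $C$ is radial or not, which is precisely the upper bound (\ref{eq:uppe}) over the topological class. Since $X$ is tangent to $C$ this already gives $\mathfrak{s}_{C}\leq\nu(X)$; the reverse inequality is what is missing, for the particular $C$ output by the gluing construction may be special in its moduli space and carry a tangent field of strictly smaller valuation.

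First I would embed $\mathcal{F}$ in a \emph{weakly equisingular} deformation $\left(\mathcal{F}_{t}\right)_{t}$, namely a family of foliations whose reduction of singularities retains a constant dual tree $\AA$, a constant dicriticity $\Delta^{n}$ and constant semi-local models $\mathcal{G}$ and $\mathcal{R}$. Because the integers $\sum_{p}\textup{Ind}$ and $\sum_{p}\textup{Tan}$ feeding the formula of \cite[Theorem 3]{hertlingfor} are exactly the discrete data frozen along such a deformation, the refined valuation relation (\ref{eq:valuation.saito}) persists verbatim for every $t$ and every $s\in\AA$. The invariant separatrix $C_{t}$ of $\mathcal{F}_{t}$ then traces a family of curves inside the topological class of $C$.

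The crux of the argument, and the step I expect to be the main obstacle, is to arrange the deformation so that the induced map $t\mapsto\left[C_{t}\right]\in\textup{Mod}(C)$ is dominant, i.e. meets the Zariski-dense locus of generic curves. This is precisely where the material gathered from \cite{MR2422017,YoyoBMS,genzmer2020saito,Genzmer1,Gomez} is required: one relates the deformation space of the foliation to the analytic moduli of its invariant separatrix and checks that the weakly equisingular deformations of $\mathcal{F}$ sweep out a dense subset of $\textup{Mod}(C)$. Granting this, fix a generic value of $t$, so that $C_{t}$ is generic in its moduli space. For such curves the cited lower bound gives $\mathfrak{s}_{C_{t}}\geq\nu(X_{t})$, while (\ref{eq:uppe}) gives $\mathfrak{s}_{C_{t}}\leq\nu(X_{t})$; hence $\mathfrak{s}_{C_{t}}=\nu(X_{t})$ and $X_{t}$ is optimal for $C_{t}$.

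It remains to upgrade optimality to the full Saito property, which I would establish inductively along the desingularization, following its very definition. At each vertex $s\in\AA$ the surviving relation (\ref{eq:valuation.saito}) shows that $\nu\left((X_{t})_{s}^{E}\right)$ realizes the local analogue of the bound (\ref{eq:uppe}) for the germ of $D\cup C_{t}^{\pi}$ at each of its singular points. Since genericity of $C_{t}$ in $\textup{Mod}(C)$ propagates to genericity of every local germ $(C_{t})_{s}^{E}$ in its own moduli space, the optimality argument of the preceding paragraph applies verbatim at each singular point of each elementary blow-up. By the inductive definition of the Saito property, $X_{t}$ is therefore Saito for $C_{t}$; taking $C'=C_{t}$ and $X=X_{t}$ for a generic value of $t$ completes the proof.
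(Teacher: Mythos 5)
Your proposal correctly identifies the overall strategy --- deform toward a generic curve, where the lower bound of \cite[Theorem 4]{genzmer2020saito} meets the upper bound realized by the construction of Proposition \ref{prop:presque.optimal} --- but it leaves unproven exactly the step that carries the mathematical content, and it runs the deformation in the opposite direction from the paper. You propose to deform the foliation $\mathcal{F}$ weakly equisingularly and then to ``arrange'' that the induced family of separatrices $t\mapsto\left[C_{t}\right]$ is dominant onto $\textup{Mod}\left(C\right)$; you acknowledge this as the main obstacle and then proceed by granting it. The paper never proves such a dominance statement, and it is unclear how one would. Instead it reverses the logic: it first chooses an analytic deformation $C_{t}$ of the curve going into the generic component of its moduli space (such a deformation exists by definition of that component), and then \emph{lifts} this curve deformation to a deformation of foliations $\mathcal{F}_{t}$ leaving each $C_{t}$ invariant. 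The existence of the lift is where the real work lies: one forms $\mathfrak{X}=E^{\star}\left(X/f\right)$ with $f$ a balanced equation of the separatrices, introduces the divisor $D_{\mathfrak{X}}$ of (\ref{eq:divisor}), and invokes Gomez-Mont's exact sequence (\ref{exact.gomez}); the sheaf $\textup{Hom}\left(\mathbb{F},\Theta/\mathbb{F}\right)$ is identified with $\Omega^{2}\left(D_{\mathfrak{X}}\right)$, and \cite[Proposition 18]{YoyoBMS} gives $\textup{H}^{1}\left(D,\Omega^{2}\left(D_{\mathfrak{X}}\right)\right)=0$ precisely because the valuation relations (\ref{eq:valuation.saito}) established in Proposition \ref{prop:presque.optimal} hold. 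This yields the surjection $\mathbb{H}^{1}\left(D,\mathbb{F}\right)\to\textup{H}^{1}\left(D,\Theta\right)\to0$, meaning every infinitesimal deformation of the curve is realized by a deformation of the foliation; the techniques of \cite[Proposition 2.2]{MR2422017} then integrate this into the actual weakly equisingular family $\mathcal{F}_{t}$. Without this cohomological surjectivity, your dominance assumption remains an assumption, not a step.

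A second, smaller gap: to upgrade optimality to the Saito property you invoke a claim that genericity of $C_{t}$ in $\textup{Mod}\left(C\right)$ propagates to genericity of every local germ $\left(C_{t}\right)_{s}^{E}$ in its own moduli space. You do not justify this, and it is not obviously true; the paper needs no such propagation. The cited lower bound \cite[Theorem 4]{genzmer2020saito} is already stated simultaneously at every vertex $s\in\AA$ for every $X\in\textup{Der}\left(\log c\right)$ when $c$ is generic, so comparing it with the relations (\ref{eq:valuation.saito}), which persist along the weakly equisingular deformation, directly gives optimality along the whole desingularization process --- which is precisely the definition of being Saito.
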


\begin{proof}
Let $E$ be the desingularization process of $C$. Denote by $\Omega$
the volume form 
\[
\text{\ensuremath{\Omega}}=E^{\star}\left(\textup{d}x\wedge\textup{d}y\right).
\]

Let $\mathfrak{X}$ be the global vector field $\mathfrak{X}=E^{\star}\left(\frac{X}{f}\right)$
where $X$ is a vector field defining $\mathcal{F}$ and $f$ is \emph{a
balanced equation of the separatricies of $X$}, as introduced in
\cite[Definition 1.2]{Genzmer1}. Following \cite[Proposition 18]{YoyoBMS},we
associate to $\mathfrak{X}$ the following divisor 
\begin{equation}
D_{\mathfrak{X}}=2\left(\left(f=0\right)^{E}-\left(f=\infty\right)^{E}\right)-C^{E}+\overline{D}\label{eq:divisor}
\end{equation}
defined in the total space of $E$. Here, $\overline{D}$ is the union
of components of $D$ invariant by $\mathfrak{X}$. Let us consider
$\mathbb{F}$ the sheaf based upon $D$ of $\mathcal{O}-$modules
of vector fields tangent to the foliation given by $\mathfrak{X}$
and $\Theta$ the sheaf based on $D$ of vector fields tangent to
$E^{-1}\left(C\right).$ In \cite[Theorem 1.6]{Gomez}, Gomez-Mont
exhibited the existence of an exact sequence in cohomology written
\begin{equation}
\mathbb{H}^{1}\left(D,\mathbb{F}\right)\to H^{1}\left(D,\Theta\right)\to H^{1}\left(D,\textup{Hom}\left(\mathcal{\mathbb{F}},\frac{\Theta}{\mathcal{\mathbb{F}}}\right)\right).\label{exact.gomez}
\end{equation}

The space $\mathbb{H}^{1}\left(D,\mathcal{\mathbb{F}}\right)$ is
identified with the space of infinitemisal deformations of $\mathcal{F}$;
the space $H^{1}\left(D,\Theta\right)$ is identified with the space
of infinitesimal deformations of $C.$ Now, the sheaf $\mathcal{\mathbb{F}}$
is locally free of rank $1$. Thus, a section $\alpha$ of $\textup{Hom}\left(\mathcal{\mathbb{F}},\frac{\Theta}{\mathcal{\mathbb{F}}}\right)$
is completely determined by the image of $E^{\star}X$ or, equivalently
by the image of $\mathfrak{X}$. By contruction, $\mathcal{F}$ is
of \emph{second kind} as defined in \cite{Genzmer1}. The relations
established in \cite[Lemme 2.1]{Genzmer1} are written in our context
\[
\nu_{s}\left(i_{E^{\star}X}\Omega\right)=\nu_{s}\left(E^{\star}f\right)+\begin{cases}
1 & \textup{if }s\textup{ is invariant by }E^{\star}X\\
0 & \text{\textup{if not}}
\end{cases}
\]

where $i$ stands for the inner product. It can be seen that, as a
consequence, the morphism of sheaves defined by 
\[
\textup{Hom}\left(\mathcal{\mathbb{F}},\frac{\Theta}{\mathcal{\mathbb{F}}}\right)\to\Omega^{2}\left(D_{\mathfrak{X}}\right),\ \alpha\mapsto i_{\alpha\left(\mathfrak{X}\right)}\ensuremath{\Omega}\wedge i_{\mathfrak{X}}\ensuremath{\Omega}
\]
is an isomorphism of sheaves : here, $\Omega^{2}\left(D_{\mathfrak{X}}\right)$
is the sheaf over $D$ of $2-$forms $\eta$ for which the divisor
$\left(\eta\right)=\left(\eta=0\right)-\left(\eta=\infty\right)$
satisfies
\[
\left(\eta\right)\ge-D_{\mathfrak{X}}.
\]
Moreover, in \cite[Proposition 18]{YoyoBMS}, it is proved that, provided
that the relations (\ref{eq:valuation.saito}) are satisfied, we have
\[
\textup{H}^{1}\left(D,\Omega^{2}\left(D_{\mathfrak{X}}\right)\right)=0.
\]
Thus, the exact sequence (\ref{exact.gomez}) reduces to 
\begin{equation}
\mathbb{H}^{1}\left(D,\mathcal{F}\right)\to\textup{H}^{1}\left(D,\Theta\right)\to0.\label{eq:onto.defo}
\end{equation}

Since $D$ is of dimension $1$ and can be covered with Stein open
sets with no $3$ by $3$ intersections \cite[Definition 1.2.4]{SiuThm,univ},
the coherence of the involved sheaves ensures that for any $k\geq2$,
we have
\[
H^{k}\left(D,\Theta\right)=H^{k}\left(D,\mathcal{\mathbb{F}}\right)=0.
\]

Now consider an analytical deformation $C_{t},\ t\in\left(\mathbb{C},0\right)$
with $C_{0}=C$ such that for all $t\neq0$, $C_{t}$ belongs to \emph{the
generic component} of its moduli space, as defined in \cite[Theorem 3]{genzmer2020saito}.
Using some techniques similar as these developed in \cite[Proposition 2.2]{MR2422017}
for unfoldings of foliations and adapted to general deformations and
starting from the exact sequence (\ref{eq:onto.defo}), we obtain
a deformation of foliations $\mathcal{F}_{t},\ t\in\left(\mathbb{C},0\right),\ \mathcal{F}_{0}=\mathcal{F}$
such that for any $t$, $\mathcal{F}_{t}$ leaves invariant $C_{t}$.
This deformation is \emph{weakly equisingular}, in the sense that,
the family of numbered dual trees of the family of processes of desingularizations
of $\mathcal{F}_{t}$ is constant equal to the numbered tree $\AA$.
In particular, the valuations $\nu\left(\left(\mathcal{F}_{t}\right)_{s}^{E}\right)$
at any $s\in\AA$ satisfy ,
\[
\forall t\in\left(\mathbb{C},0\right),\ \nu\left(\left(\mathcal{F}_{t}\right)_{s}^{E}\right)=\nu\left(\mathcal{F}_{s}^{E}\right).
\]
Now, if $c$ is generic in the moduli space then, according to \cite[Theorem 4]{genzmer2020saito},
for any $X$ in $\textup{Der}\left(\log c\right)$, the following
lower bound holds 
\[
\forall s\in\AA,\ \nu\left(X_{s}^{E}\right)\geq\frac{\nu\left(c_{s}^{E}\right)+\delta_{s}^{X}}{2}-\pair{1-\Delta_{s}^{X}}{\frac{1}{2}}{\nu\left(c_{s}^{E}\right)+\delta_{s}^{X}}.
\]
Thus, for any $t\neq0,$ the foliation $\mathcal{F}_{t}$ - or a vector
field $X_{t}$ defining $\mathcal{F}_{t}$ - leaves invariant a curve
$C_{t}$ topologically equivalent to $C_{0}=C$ that is optimal for
$C_{t}$ and keeps on being optimal along the desingularization process
of $C_{t},$ that means precisely, is Saito for $C_{t}$.
\end{proof}

\subsection{Generic dimension of the moduli space $\textup{Mod}\left(C\right)$}

According to \cite[Theorem 4.2]{MatQuasi}, the generic dimension
of the moduli space of $C$ is equal to $\dim\textup{H}^{1}\left(D,\Theta\right)$
when $C$ is chosen generic in its moduli space. The results of this
section and these of \cite{genzmer2020saito} ensure that this dimension
can be computed from the topological datas associated to a Saito foliation
for a generic element in the topological class of $C.$ In \cite{moduligenz},
an algorithm is given to compute this topological datas when $C$
is an union of regular curve. This article implies that the same algorithm
still provides this topological datas in the general case. We implemented,
among other procedures this algorithm on Sage 9.{*} - or Python 3
-. See the routine \emph{Courbes.Planes} following the link
\begin{quote}
https://perso.math.univ-toulouse.fr/genzmer/
\end{quote}

\section{Examples}
\begin{example}[\emph{The Saito foliation of the double cusp}]
 The double cusp is the curve $C$ defined by 
\[
\left(y^{2}+x^{3}\right)\left(y^{2}-x^{3}\right)=0.
\]
It is a curve with $0-$dimensional moduli space. Its desingularization
$E$ consists in five elementary blowing-ups
\[
E=\bigcirc_{i=0}^{4}E_{i}.
\]
The Saito dicriticity of $C$ is given in Figure \ref{fig:Saito-dicriticities-of}.
The number on each vertex allows us to identify the order on the tree
defined by 
\[
0\leq1,\ 0\leq2,\ 1\leq4,\ 2\leq3.
\]
The dots encode the configuration. Here, the configuration associated
to the Saito dicriticity is 
\[
\epsilon_{0}=\epsilon_{1}=\epsilon_{2}=1,\ \epsilon_{3}=\epsilon_{4}=0.
\]
\begin{figure}
\begin{centering}
\includegraphics[scale=0.7]{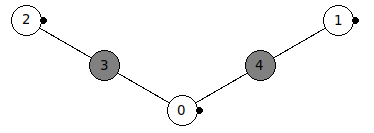}
\par\end{centering}
\caption{\label{fig:Saito-dicriticities-of}Saito dicriticities of the double
cusp.}
\end{figure}

It can be seen, by computing its desingularization, that the vector
field $X$ defined by 
\begin{align*}
X & =\left(\frac{9}{5}x^{3}y-x^{2}y^{2}+y^{3}-\frac{4}{5}x^{2}+xy\right)\partial_{x}\\
 & +\left(\frac{6}{5}x^{2}y^{2}-\frac{3}{2}xy^{3}-\frac{5}{6}x^{3}-\frac{6}{5}xy+\frac{2}{3}y^{2}\right)\partial_{y}
\end{align*}
 is Saito for the double cusp. Indeed, it is tangent to $C$ and non
dicritical. Its valuation satifies 
\[
2=\frac{\nu\left(C\right)=4}{2}-\pair{1-1}{\frac{1}{2}}4.
\]
After one blowing-up, it has three singularities along the exceptional
divisor given in the coordinates $\left(y=y_{1},x=y_{1}x_{1}\right)$
by 
\[
s_{1}=\left(0,0\right),\ s_{3}=\left(0,-\frac{6}{5}\right)\text{\textup{ and }}s_{2}=\left(0,\infty\right).
\]
The singularity $s_{3}$ is reduced : the quotient of the eigenvalues
of $E_{0}^{\star}X$ at $s_{2}$ is actually equal to $5.$ At $s_{1}$
and $s_{3}$, $E_{0}^{\star}X$ is of valuation $1$ which satisfies
\[
1=\frac{\nu\left(C_{s_{1}}^{E}\right)+1}{2}-\pair{1-1}{\frac{1}{2}}2=\frac{\nu\left(C_{s_{3}}^{E}\right)+1}{2}-\pair{1-1}{\frac{1}{2}}2.
\]
After blowing-up $s_{1}$, the vector field $\left(E_{0}\circ E_{1}\right)^{\star}X$
has two singularities along the new exceptional divisor. One is reduced
with positive and rational quotient of the eigenvalues. The other
is radial, that is, its linear part is locally in coordinates written
$x\partial_{x}+y\partial_{y}.$ The same occurs at $s_{3}$. At the
radial singularities $s_{5}$ and $s_{6}$ which are dicritical, one
has 
\[
1=\frac{\nu\left(C_{s_{5}\textup{ or }s_{6}}^{E}\right)+2}{2}-\pair{1-0}{\frac{1}{2}}3.
\]
As a consequence, $X$ is indeed Saito for $C.$ 
\end{example}

\begin{example}[\emph{Generic dimension of the moduli space of the union of $r$ cusps
of type $\left(2,3\right)$}]
In \cite{hernandes2023analytic}, the authors give a formula for
the generic dimension of the moduli space of the curve 
\[
\mathcal{C}_{r}=\left\{ \prod_{i=1}^{r}\left(y^{2}+a_{i}x^{3}\right)=0\right\} 
\]
 where $a_{i}\neq a_{j}\neq0$ for $i\neq j$. When $r$ is even,
this dimension happens to be equal to 
\begin{equation}
\frac{\left(r-1\right)\left(3r-5\right)+1}{2}.\label{eq:dimension.cusps}
\end{equation}

Let us illustrate how our algorithm works in this situation. The proximity
matrix of the $\mathcal{C}_{r}$ is 
\[
\left(\begin{array}{ccc}
1 & -1 & -1\\
0 & 1 & -1\\
0 & 0 & 1
\end{array}\right)
\]
and the numbering of $\AA$ is $\left(0,0,r\right).$ The Saito dicriticity
is equal to $\left(1,1,0\right)$ and the associated configuration
is $\left(2,1,\frac{r}{2}\right).$ After one blowing-up, according
to \cite[Proposition 4]{genzmer2020saito}, we get
\[
\dim H^{1}\left(D_{1},\Theta\right)=\frac{\left(r-1\right)\left(r-2\right)}{2}+\frac{\left(r-1\right)\left(r-2\right)}{2}=\left(r-1\right)\left(r-2\right).
\]
Now, after one blowing-up the curve $D_{1}\cup\mathcal{C}_{r}^{E_{1}}$
is given in local coordinates $y=y_{1}x_{1}$ by 
\[
x_{1}\prod_{i=1}^{r}\left(y_{1}^{2}+a_{i}x_{1}\right)=0.
\]
The proximity matrix of the desingularization of the latter curve
is now 
\[
\left(\begin{array}{cc}
1 & -1\\
0 & 1
\end{array}\right)
\]
and the numbering $\left(0,r+1\right).$ The Saito dicriticity is
equal to $\left(1,0\right)$ and the associated configuration is $\left(1,\frac{r}{2}\right).$
Thus, we obtain
\[
\dim H^{1}\left(D_{2},\Theta\right)=\frac{\left(\frac{r}{2}-1\right)\left(\frac{r}{2}-2\right)}{2}+\frac{\frac{r}{2}\left(\frac{r}{2}-1\right)}{2}.
\]
Finally, after one more blowing-up the curve $D_{1}\cup D_{2}\cup\mathcal{C}_{r}^{E_{2}}$
is given by 
\[
x_{2}y_{2}\prod_{i=1}^{r}\left(y_{2}+a_{i}x_{2}\right).
\]
The proximity matrix reduces to $\left(1\right)$ and the numbering
to $\left(r+2\right).$ The Saito dicriticity is just $\left(0\right)$
and the configuration $\left(\frac{r}{2}+1\right).$ Thus, still following
\cite[Proposition 4]{genzmer2020saito}, one has
\[
\dim H^{1}\left(D_{3},\Theta\right)=\frac{\left(\frac{r}{2}-1\right)\left(\frac{r}{2}-2\right)}{2}+\frac{\frac{r}{2}\left(\frac{r}{2}-1\right)}{2}+r-1.
\]
Adding the above dimensions leads to 
\begin{align*}
\dim H^{1}\left(D,\Theta\right) & =\left(r-1\right)\left(r-2\right)+\frac{\left(\frac{r}{2}-1\right)\left(\frac{r}{2}-2\right)}{2}+\frac{\frac{r}{2}\left(\frac{r}{2}-1\right)}{2}\\
 & +\frac{\left(\frac{r}{2}-1\right)\left(\frac{r}{2}-2\right)}{2}+\frac{\frac{r}{2}\left(\frac{r}{2}-1\right)}{2}+r-1\\
 & =\frac{\left(r-1\right)\left(3r-5\right)+1}{2}
\end{align*}
which is confirmed by the formula (\ref{eq:dimension.cusps}).
\end{example}

\begin{example}[\emph{Generic Tjurina number of a curve.}]
 The algorithm defined above allows us to provide immediately a computation
of the generic Tjurina number, that is, the dimension of the quotient
of $\mathbb{C}\left\{ x,y\right\} $ by the Tjurina ideal of $C$,
i.e $\left(f,\partial_{x}f,\partial_{y}f\right)$ where $f$ is an
equation of $C.$ Let $E$ be the desingularization process of $C.$
Above the exceptional divisor $D$ of $E$, we consider the sheaves
$T_{df}$ and $T_{f}$ of vector fields tangent respectively to the
foliation $E^{\star}df$ and $E^{-1}\left(f^{-1}\left(0\right)\right)$.
The following sequence of sheaves 
\[
0\to T_{df}\to T_{f}\xrightarrow{E^{\star}df\left(\cdot\right)}\left(f\circ E\right)\mathcal{O}_{D}\to0
\]
is exact \cite{JMSE}. The associated long exact sequence in cohomology
is written 
\begin{align*}
0 & \to H^{0}\left(D,T_{df}\right)\to H^{0}\left(D,T_{f}\right)\to H^{0}\left(D,\left(f\circ E\right)\mathcal{O}_{D}\right)\\
 & \to H^{1}\left(D,T_{df}\right)\to H^{1}\left(D,T_{f}\right)\to0
\end{align*}
since $H^{1}\left(D,\left(f\circ E\right)\mathcal{O}_{D}\right)=0.$
Now, we can identify the global sections of the above sheaves : 
\begin{align*}
H^{0}\left(D,\left(f\circ E\right)\mathcal{O}_{D}\right) & =\left(f\right)\\
H^{0}\left(D,T_{f}\right) & =\left\{ \left.X\textup{ vector field}\right|X\cdot f\in\left(f\right)\right\} 
\end{align*}
Therefore, the following sequence is exact
\[
0\to\frac{\left(f\right)}{\left\{ \left.X\cdot f\right|X\textup{ tangent to }f=0\right\} }\to H^{1}\left(D,T_{df}\right)\to H^{1}\left(D,T_{f}\right)\to0.
\]

Now, it can be seen that 
\[
\frac{\left(f\right)}{\left\{ \left.X\cdot f\right|X\textup{ tangent to }f=0\right\} }\simeq\frac{\left(f,\textup{Jac}f\right)}{\textup{Jac}f}.
\]
The previous short exact sequence ensures that 
\[
\dim H^{1}\left(D,T_{f}\right)-\dim H^{1}\left(D,T_{df}\right)+\dim\frac{\left(f,\textup{Jac}f\right)}{\textup{Jac}f}=0
\]
which can be also written 
\[
\tau\left(C\right)=\mu\left(C\right)-\delta\left(C\right)+\dim H^{1}\left(D,T_{f}\right)
\]

where $\mu\left(C\right)$ is the Milnor number of $C$ and $\delta\left(C\right)$
its modularity \cite{univ}. Now, if $C$ is chosen generic in its
moduli space, we obtain
\[
\tau_{\textup{gen}}\left(C\right)=\mu\left(C\right)-\delta\left(C\right)+\dim_{\textup{gen}}\mathbb{M}\left(C\right).
\]
Since the Milnor number and the modularity can be computed from the
numbered tree of $C$, the formula above yields an agorithm to compute
the generic Tjurina number of $C$ - which happens to be also the
minimal Tjurina number. 

As an example, the curve given by the following parametrization $C=\left(t^{9},t^{12}+t^{17}\right)$
has been studied by Peraire \cite{PERAIRE1997114} and she found 
\[
\tau_{\textup{gen}}\left(C\right)=80.
\]
It can be seen that 
\[
\mu\left(C\right)=98\textup{ and }\delta\left(C\right)=29.
\]

Our algorithm provides the generic dimension of the moduli space and
we find
\[
\dim_{\textup{gen}}\mathbb{M}\left(C\right)=11,
\]
which confirms the result of Peraire. 
\end{example}

\bibliographystyle{plain}
\bibliography{Bibliographie}

\end{document}